\newtheorem{lmm}{Lemma}
\newtheorem{thr}{Theorem}
\newtheorem{dfn}{Definition}
\newtheorem{rmr}{Remark}
\newtheorem{notation}{Notation}
\newcommand{\R}{\mathbb{R}}
\newcommand{\C}{\mathbb{C}}
\newcommand{\fkg}{\mathfrak g}
\newcommand{\fkh}{\mathfrak h}
\begin{document}

\begin{center} {\Large \bf Filiform Lie algebras with low derived length}
\end{center}

\begin{center}
{\bf F.J. Castro-Jim\'{e}nez, M. Ceballos, J. N\'u\~nez-Vald\'es}\footnote{ {\small FJCJ: Departamento de \'Algebra e IMUS. Facultad de Matem\'aticas, Universidad de Sevilla. C/ Tarfia s/n, 41012 Seville (Spain). {\bf castro@us.es} \\ MC: Dpto. de Ingenier\'{\i}a. Universidad Loyola Andaluc\'{\i}a, Campus Palmas Altas, C/ Energ\'{\i}a Solar 1, Ed. E, Seville (Spain).  {\bf mceballos@us.es} \\ JNV: Departamento de Geometr\'{\i}a y Topolog\'{\i}a. Facultad de Matem\'aticas, Universidad de Sevilla. C/ Tarfia s/n, 41012 Seville (Spain). {\bf jnvaldes@us.es}}} \end{center}

\begin{abstract}
We construct, for any $n\geq 5$, a family of complex filiform Lie algebras with derived length at most $3$ and dimension $n$.
We also give examples of $n$-dimensional filiform Lie algebras with derived length greater than $3$.
\end{abstract}

\vspace{0.3cm}

\noindent {\bf Keywords:} Filiform Lie algebra, derived length, Lie algebra invariants.

\noindent {\bf  2010  Mathematics Subject Classification:} 17B30,
17--08, 17B05, 68W30.

\section{Introduction}
\label{intro}

The derived length, also known as solvability index, of nilpotent groups or nilpotent Lie algebras has been studied by a number of authors. In the case of finite $p$--groups this study was initiated by Burnside \cite{Burnside1913, Burnside1914} and then continued by several authors (e.g. P. Hall, W. Magnus, N. Itô, G. Higman, N. Blackburn, A. Mann among many others). In the case of a nilpotent Lie algebra, the study of its derived length has been treated in several papers, e.g. by Dixmier \cite{Dixmier1955}, Patterson \cite{Patterson1955, Patterson1956}, Bokut \cite{Bokut1971}. In \cite{B-D-V-2008} the authors show that there are filiform Lie algebras of arbitrary derived length. They describe, for any $k\geq 2$, a filiform Lie algebra of derived length $k$  and dimension $n$ for each $n$ satisfying $2^{k} \leq n+1 < 2^{k+1}$. These algebras were studied by Y. Benoist in \cite{Benoist1995}.

Filiform Lie algebras were introduced by  M. Vergne \cite{VE} and they have been classified up to dimension $8$; see \cite{AG} and a corrected version of this classification given in \cite{Remm}. Moreover, nilpotent Lie algebras are classified up to dimension $7$ in \cite{Gong}. In view of the classification of filiform Lie algebras, several invariants have been introduced in the literature. For example, in \cite{ENR1} two numerical invariants were introduced and studied. Our results on the derived length of filiform Lie algebras are based on these invariants. Recently, some new invariants have been defined in order to study these algebras. In particular, the notion of {\em breath} is introduced in \cite{KMS} and a generalization of it, by the so-called {\em characteristic sequence}, in \cite{Remm2}. In addition, Lie algebras graded by an abelian group are studied in \cite{BZ}. The special case of graded filiform Lie algebras is treated in \cite{BGR,BGR2,BGR3}.

In this paper we construct, for any $n\geq 5$, a family of complex filiform Lie algebras with derived length at most $3$ and dimension $n$, see Theorem \ref{solvability-index-leq4}. 
To this end we improve the result \cite[Prop. 2]{CNT2} and describe the law of any filiform Lie algebra of dimension $n\geq 5$ with respect  to a suitable {\em adapted basis} by using two numerical invariants associated with the algebra, see Theorem \ref{leygeneral}. This result is related to the description of the affine variety of $n$--dimensional filiform Lie algebras given in \cite[Sec. 4]{Million}, where the variety of Lie algebras of {\em maximal class} is described. See also \cite{SZ} where the related notion of {\em narrow} Lie algebras is treated.

Theorem \ref{leygeneral} generalizes a result of F. Bratzlavsky \cite{Bra} valid for metabelian Lie algebras.  We also construct a family of complex filiform Lie algebras of dimension $15$ and derived length $4$ generalising the one given in \cite[Ex. 3.2]{BUarxiv}. Let us remark that, as proved in \cite[Prop. 3.7]{BUarxiv}, there is no complex filiform Lie algebra of dimension less than or equal to 14 and derived length 4.

Finally, let us also point out that there are several results concerning solvable Lie algebras with low derived length. A solvable Lie algebra is said to be $k$-step solvable, see Definition \ref{defresoluble}, if its derived length is $k$. In particular, $1$-step solvable Lie algebras are the abelian ones. Abelian subalgebras and ideals are useful in the study of Lie algebra contractions and degenerations. There is some extensive literature on these topics, in particular for low-dimensional Lie algebras; see \cite{BU10,GOR,GRH} and the references given therein. Next, $2$-step solvable Lie algebras are called metabelian. These algebras have been studied in \cite{AC} by using the notion of weight graphs. Moreover, they admit abelian complex and Novikov structures, see \cite{BaD,BD}.
In \cite{U} the author uses $3$-step solvable Lie algebras to construct a homogeneous conformally parallel Spin(7) metric on a certain solvmanifold. Finally, it is proved in \cite{NZ} that $3$-step solvable Lie algebras contain the first oscillator algebras with non-trivial semi-equicontinuous coadjoint orbits.

\section{Preliminaries}
\label{sec:1}

In this section we recall some preliminary concepts, results and notations on Lie algebras. We have mainly followed \cite{GK,JAC,Serre,Var98}. From here on, only finite-dimensional complex Lie algebras are considered.  

Given a Lie algebra $\mathfrak{g}$, a vector subspace
$\mathfrak{h}$ of $\mathfrak{g}$ is an {\em ideal}
if $ [\mathfrak h,\mathfrak g] \subseteq \mathfrak{h}$. We denote by $\dim \fkh$ the dimension of $\fkh$ as a vector space.

The {\it descendant central series} of ideals, also known as the {\it lower central series}, of a given Lie algebra $\mathfrak{g}$ is the filtration $$C^1\fkg
\supseteq C^2\fkg \supseteq
\ \dots \ \supseteq
C^k\fkg \supseteq \dots$$
where
$$C^1\fkg=
\mathfrak{g} \, {\mbox{ and }} \,
C^k\fkg=[C^{k-1}\fkg,
\mathfrak{g}] \, {\mbox{ for all }} k\geq 2. $$ One has $[C^k\fkg, C^\ell\fkg] \subseteq C^{k+\ell}\fkg$ for all $k, \ell \geq 1$.

\begin{dfn}\label{defnilpotente}
A Lie algebra $\mathfrak{g}$ is said to be {\em nilpotent} if there exists $m\in \mathbb N$
such that $C^m\fkg = \{0\}$. The smallest such $m$ is called the {\em nilpotency class} (or the {\em nilindex}) of $\fkg$.
\end{dfn}

The {\em derived series} of a given Lie algebra $\mathfrak{g}$ is the filtration $$D^0\fkg
\supseteq D^1\fkg \supseteq \ \dots \ \supseteq
D^k\fkg \supseteq \dots$$
where
$$D^0\fkg=
\mathfrak{g} \, {\mbox{ and }} \,
D^k\fkg=[D^{k-1}\fkg,
D^{k-1}\fkg] \, {\mbox{ for all }} k\geq 1. $$

\begin{dfn}\label{defresoluble}
A Lie algebra $\mathfrak{g}$ is said to be {\em solvable} if there exists $m\in \mathbb N$  such that $D^m\fkg = \{0\}$. The smallest such $m$ is called the {\em derived length} (or the {\rm solvability index}, or even the {\em solvindex}) of $\fkg$. We say that $\fkg$ is {\em $m$-step solvable} if the {\em derived length} of $\fkg$ is  $m$.
\end{dfn}

The {\em derived} Lie algebra of $\mathfrak{g}$ is by definition $C^2 \fkg=D^1 \fkg=[\fkg,\fkg]$. From here on, we will denote the derived algebra by $D \fkg$. A Lie algebra $\mathfrak{g}$ satisfying $D \fkg=\{0\}$ is called {\em abelian}. If $D \fkg$ is abelian, i.e. if $D^2 \fkg=\{0\}$, then $\fkg $ is said to be {\em metabelian}.

Let us notice that every nilpotent Lie algebra is solvable, since $D^{k-1}\fkg \subseteq
C^k\fkg$ for all $k \geq 1$. Moreover, we have
$D^{k} \fkg \subset C^{2^k} \fkg$, see e.g. \cite[page 25]{JAC}. This means in particular that if $\dim \fkg \leq 2^k$, for some integer $k$,  then the derived length of $\fkg$ is less than or equal to $k$.

\begin{dfn}\rm{\cite[Section 1.5]{VE}}\label{deffiliforme}
A $n$-dimensional Lie algebra $\mathfrak{g}$ is said to be {\em filiform} if its descendant central series satisfies
\begin{equation}\label{lcs2}
{\rm dim}(C^k \, \fkg)\!=\!n-k {\mbox{ for all }} 2 \leq k\leq n.
\end{equation}
\end{dfn}

Any $n$--dimensional filiform Lie algebra is nilpotent and its nilpotency class is $n$.

A basis $\{x_1, \ldots,x_n\}$ of  a  filiform Lie algebra $\mathfrak{g}$ is called {\em adapted}, see \cite[Sec. 4.2]{VE}, if the following relations hold
\begin{equation} \label{eqdeVergne}\begin{array}{ll}
\ [x_1, x_i] = x_{i+1} & \,\,\, {\rm for} \,\,\, 2 \leq i \leq n-1, \\
\ [x_1, x_n] = 0, \\
\ [x_2,x_3] = 0 & \,\,\,{\rm mod} \,\,\, {\C}x_5 + \cdots + {\C}x_n.
\end{array}
\end{equation}
As a consequence, it is also satisfied that
\begin{equation} \label{Vergne2} [x_i,x_j]=0 \quad {\rm mod} \quad \sum_{k \geq i+j}^n {\C} x_k.
\end{equation}

If we apply the basis change
\begin{equation} \label{basischange}
e_1=x_1, \quad e_2=x_n, \quad e_3=x_{n-1}, \quad \ldots \quad e_n=x_2
\end{equation}
then the new basis satisfies the following relations
\begin{equation} \label{eq2}\begin{array}{ll}
 \ [e_1, e_h] = e_{h-1} & \,\,\, {\rm for} \,\,\, 3 \leq h \leq n, \\
\ [e_2, e_h] = 0 & \,\,\, {\rm for} \,\,\, 1 \leq h \leq n, \\
\ [e_3,e_h] = 0 & \,\,\,{\rm for} \,\,\, 2 \leq h \leq n.
\end{array}
\end{equation}

In order to simplify our presentation we say that a basis $\{e_1, \ldots, e_n\}$ of a filiform Lie algebra is adapted if it satisfies (\ref{eq2}). 

As it was pointed out in \cite[Section 4.2]{VE} any filiform Lie algebra admits an adapted basis. In fact, with respect to an adapted basis, and as a consequence of (\ref{eqdeVergne}) and (\ref{Vergne2}), the  ideals of the descendant central series are given by
\begin{equation}\label{coroderivada} C^k \, \fkg = \langle e_2, \ldots, e_{n-k+1}  \rangle   \end{equation} for $2\leq k\leq n-1$, where the angle brackets mean the $\C$--vector space generated by the corresponding vectors.

A $n$-dimensional filiform Lie algebra $\mathfrak{g}$ is called a {\em model} filiform Lie algebra (see \cite{GK}) if
the only nonzero brackets  in its law  are $[e_1, e_h] = e_{h-1}$,
for $3 \leq h \leq n$, where $\{e_1,\ldots,e_n\}$ is an adapted basis of $\mathfrak{g}$.
This condition is obviously independent of the chosen adapted basis in $\mathfrak{g}$. Notice that for $n \leq 4$ every filiform Lie algebra is a model filiform Lie algebra.

\subsection{Two Numerical Invariants of Filiform Lie Algebras}\label{TwoNumerical}

We recall the definition of two invariants of a filiform Lie algebra $\fkg$ introduced in~\cite{ENR1}. These invariants are defined for non-model filiform Lie algebras, so we can assume that $n > 4$. 

First, the
invariant $z_1=z_1(\fkg)$ is defined as  $$z_1 = {\rm max} \{k \in {\mathbb N} \,| \,C_{\mathfrak{g}}
(C^{n-k+2}\fkg) \supseteq C^2\fkg \}$$
where $C_{\mathfrak{g}}
(\mathfrak{h})$ is the centralizer of a given Lie subalgebra $\mathfrak{h}$ of $\mathfrak{g}$, i.e. the set of elements in $\fkg$ whose bracket with any element of $\mathfrak{h}$ is zero.

The invariant $z_2=z_2(\fkg)$  is defined as  $z_2 = \, {\rm max} \, \{ k
\in {\mathbb N} \,|\, C^{n-k+1}\fkg\,\, {\rm is} \,\,
{\rm abelian}\}$. Consequently, $C^{n-z_2+1}\fkg$
is the largest abelian ideal in the lower central series of $\mathfrak g$. This invariant is related to the notion of $k$-abelian filiform Lie algebra given in \cite{GGK}. More concretely, every filiform Lie algebra is
$(n-z_2+1)$-abelian.

\begin{rmr}\label{cason-1}
Notice that, for any $n$-dimensional non-model filiform Lie algebra $\fkg$, one has $z_2(\fkg)\leq n-1$. Moreover, $z_2(\fkg)=n-1$ if and only if $\fkg$ is metabelian.
\end{rmr}

Equivalent definitions for the invariants $z_1$ and $z_2$, more  appropriate for practical use, are:\, $z_1 = \, {\rm min}
\, \{k \geq 4 \, | \, [e_k,e_n] \neq 0\}$ and $z_2 = \, {\rm min}
\, \{k \geq 4 \, | \, [e_k,e_{k+1}] \neq 0\}$, where $\{e_1, \ldots, e_n\}$
is an adapted basis of $\mathfrak g$.

These two invariants satisfy the following inequalities, see \cite[Th. 15]{ENR1}:
\begin{equation}\label{Bound}
4 \leq z_1 \leq z_2 < n \leq 2z_2-2.
\end{equation}

Given a $n$-dimensional non-model filiform Lie algebra $\mathfrak g$, we use the triple $(z_1,z_2,n)$ to summarize the information about both invariants and the dimension of $\mathfrak g$.

\subsection{Previous results}\label{previousresults}

In \cite[Lemme 1]{Bra} Bratzlavsky obtained the general law for a filiform metabelian Lie algebra, that is, a filiform Lie algebra associated with the triple $(z_1,n-1,n)$, for $4\leq z_1 \leq n-1$. In \cite[Sec. 1]{L} the author improves, for the infinite dimensional case, the classification result of \cite[Prop. 3]{Bra} only valid in finite dimension. On the other hand, in \cite[Lemma 1.2]{GJK} the authors give the parametric expression of a list of basic brackets for any $n$-dimensional filiform Lie algebra.

\begin{thr}{\rm{ \cite[Lemme 1]{Bra} }}\label{Teorema-de-Bratz}
Let $\fkg$ be a filiform Lie algebra of dimension $n \geq 5$ whose derived Lie algebra $D \fkg$ is abelian. Then there exist a basis $\{x_1, \ldots, x_n\}$ of $\fkg$ and some complex numbers $\lambda_0, \ldots, \lambda_{n-5}$ such that
\begin{align*} [x_1,x_i] &=x_{i+1} \,\,\, {\rm for} \,\, 2 \leq i \leq n-1; \\ 
[x_i,x_j] &= 0 \,\,\, {\rm for} \,\, 3 \leq i < j \leq n \quad {\rm and}\\
[x_2,x_i] & = \sum_{r=0}^{n-i-2} \lambda_r x_{i+2+r} \,\,\, {\rm for} \,\, 3 \leq i \leq n-2.\end{align*}
\end{thr}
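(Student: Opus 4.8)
The plan is to start from an adapted basis $\{e_1,\dots,e_n\}$ of $\fkg$ as in \eqref{eq2}, which exists since $\fkg$ is filiform, and then exploit the hypothesis $D^2\fkg=\{0\}$ to pin down all remaining brackets. From \eqref{eq2} we already have $[e_2,e_h]=0$ and $[e_3,e_h]=0$ for all relevant $h$, together with $[e_1,e_h]=e_{h-1}$ for $3\le h\le n$. The only brackets not yet determined are the $[e_i,e_j]$ with $4\le i<j\le n$. The first step is to observe that \eqref{coroderivada} gives $D\fkg=C^2\fkg=\langle e_2,\dots,e_{n-1}\rangle$, so each such bracket can be written as $[e_i,e_j]=\sum_{k=2}^{n-1}c_{ij}^k e_k$ for scalars $c_{ij}^k$; the metabelian condition then forces additional vanishing.

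The second step is a weight/grading argument. The operator $\operatorname{ad}(e_1)$ acts on the span of $e_2,\dots,e_n$ as a single nilpotent Jordan block sending $e_h\mapsto e_{h-1}$, so it is natural to assign $e_i$ the weight $i$ and check that the Jacobi identity with $e_1$ forces $[e_i,e_j]$ to be a combination of $e_k$ with $k=i+j-\text{(something)}$; concretely, applying $\operatorname{ad}(e_1)$ repeatedly to $[e_i,e_j]$ and using that $[e_1,e_2]=0$ shows $[e_i,e_j]\in\langle e_2,\dots,e_{i+j-3}\rangle$ when $i+j-3\ge 2$ and is zero otherwise. This already kills $[e_i,e_j]$ for small index sums and, after a change of basis, one expects to reduce to the case where the ``top'' bracket $[e_2,\cdot]$ carries all the parameters. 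The key computational engine is the Jacobi identity $[[e_1,e_i],e_j]+[[e_i,e_j],e_1]+[[e_j,e_1],e_i]=0$, i.e. $[e_{i-1},e_j]-[e_{i},e_{j-1}]=[e_1,[e_i,e_j]]$, which is a recursion letting one express every $[e_i,e_j]$ in terms of the brackets $[e_2,e_h]$ (or $[e_3,e_h]$, which vanish).

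The third step uses $D^2\fkg=\{0\}$ decisively: since $D\fkg=\langle e_2,\dots,e_{n-1}\rangle$ is abelian, we get $[e_i,e_j]=0$ for all $2\le i<j\le n-1$, and in particular $[e_i,e_j]=0$ whenever both indices are at most $n-1$. Feeding this into the recursion from the second step, the only brackets among $\{e_4,\dots,e_n\}$ that can survive are those involving $e_n$, and the Jacobi recursion $[e_{i-1},e_n]-[e_i,e_{n-1}]=[e_1,[e_i,e_n]]$ together with $[e_i,e_{n-1}]=0$ propagates the single datum $[e_2,e_{n-1}]$ — wait, more carefully, one parametrizes $[e_2,e_{n-2}],\dots$ Here I would set $[x_2,x_3]=\sum_r\lambda_r x_{5+r}$ as the defining data and check by descending/ascending induction on $i$ (using the recursion and the vanishing from metabelianness) that $[x_2,x_i]=\sum_{r=0}^{n-i-2}\lambda_r x_{i+2+r}$; the shift structure is exactly what the $\operatorname{ad}(e_1)$-equivariance predicts. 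Finally I would perform an explicit change of adapted basis $x_i=\alpha e_i+(\text{corrections})$ to absorb any normalization and to bring $[e_1,e_i]=e_{i+1}$ into the stated index convention (note the theorem indexes so that $\operatorname{ad}(x_1)$ raises rather than lowers, so a relabeling $x_i\leftrightarrow e_{n+2-i}$-type reindexing is part of the bookkeeping), and verify the Jacobi identity holds for the resulting law for every choice of $\lambda_0,\dots,\lambda_{n-5}$, so that the family is genuinely parametrized.

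The main obstacle I anticipate is the bookkeeping in the second and third steps: keeping track of which brackets are forced to vanish, getting the index ranges in the recursion exactly right (especially near the boundary indices $i=n-1,n$), and organizing the induction so that the coefficients line up as a single shifted sequence $\lambda_0,\dots,\lambda_{n-5}$ rather than an a priori larger family. A secondary subtlety is checking that no \emph{further} relations among the $\lambda_r$ are imposed by the remaining Jacobi identities (those not involving $e_1$) — one must confirm that $[e_2,e_i]$ and $[e_2,e_j]$ automatically bracket to zero given the proposed law, which is where the abelianness of $D\fkg$ is used a second time and which ultimately shows the $\lambda_r$ are free parameters.
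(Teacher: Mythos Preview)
Your approach is essentially correct and is more direct than what the paper does. The paper does not give a standalone proof of this theorem: it is quoted from Bratzlavsky, and the only justification offered in the paper is the remark (just after Theorem~\ref{leygeneral}) that Theorem~\ref{leygeneral} implies Theorem~\ref{Teorema-de-Bratz} for non-model filiform Lie algebras, together with the observation immediately following the statement that the model case corresponds to all $\lambda_r=0$. In other words, the paper's route is: specialize the general law of Theorem~\ref{leygeneral} to the case $z_2=n-1$ (which is equivalent to $D\fkg$ abelian by Remark~\ref{cason-1}), where the parameters $\gamma_j$ and $\beta_{k\ell}$ disappear and only the $\alpha_i$ survive, and then relabel the adapted basis to match Bratzlavsky's conventions.

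Your direct argument avoids invoking the heavier Theorem~\ref{leygeneral} and uses only the adapted basis, the identification $D\fkg=\langle e_2,\dots,e_{n-1}\rangle$ from \eqref{coroderivada}, the metabelian hypothesis to kill all brackets among $e_2,\dots,e_{n-1}$, and the Jacobi recursion $\operatorname{ad}(e_1)[e_i,e_n]=[e_{i-1},e_n]$ (valid because $[e_i,e_{n-1}]=0$). This is cleaner and self-contained. One passage of your write-up is muddled, though: you speak of ``the single datum $[e_2,e_{n-1}]$'' and then of ``$[e_2,e_{n-2}]$'', but in the adapted basis $[e_2,\,\cdot\,]\equiv 0$ by \eqref{eq2}. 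The free parameters actually live in the brackets $[e_i,e_n]$ for $4\le i\le n-1$; the recursion together with $[e_3,e_n]=0$ shows that all of these are determined by the single vector $[e_{n-1},e_n]\in\langle e_2,\dots,e_{n-3}\rangle$, which carries exactly $n-4$ free coefficients. After the relabeling $x_1=e_1$, $x_i=e_{n+2-i}$ for $i\ge 2$ (which you correctly anticipate), this datum becomes $[x_2,x_3]$ as you eventually write. Your final check that the remaining Jacobi identities $J(e_i,e_j,e_n)$ with $2\le i<j\le n-1$ impose no constraints on the $\lambda_r$ is correct and immediate: each term is a bracket of two elements of the abelian ideal $D\fkg$.
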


\begin{rmr}
Notice that when $\lambda_0=\cdots= \lambda_{n-5}=0$, we obtain the model filiform Lie algebra since the equalities given in (\ref{basischange})
define an adapted basis of $\fkg$.

The parameters $\lambda_r$ in Theorem \ref{Teorema-de-Bratz} are free. This is due to the fact that $D \fkg$ is abelian and, therefore, all the Jacobi identities are satisfied for any value of the $\lambda_r$.
\end{rmr}

\begin{notation} \label{Ph} Given a $n$-dimensional Lie algebra $\fkg$ with basis
$\{e_1, \ldots,e_n\}$ and an integer $1\leq h \leq n$  we denote by
$$P_h : \mathfrak{g} \rightarrow {\mathbb C}$$ the ${\C}$-linear map defined as follows: for any vector $u\in \mathfrak{g}$, $P_h(u)$ is the $h$-th coordinate of $u$ with respect to the given  basis; i.e. one has $$u = \sum_{h=1}^{n} P_h(u) e_h.$$\end{notation}

The following theorem is stated in \cite[Prop. 2]{CNT} but we correct here a mistake there on the coefficient of $e_2$ in the brackets $[e_{z_{1}+k},e_{z_{2}+\ell}]$. More precisely, next theorem describes the law of any filiform non-model Lie algebra, with a given associated triple $(z_1,z_2,n)$ with respect to a suitable adapted basis. This result can be compared to the description of the affine variety of $n$--dimensional filiform Lie algebras given in \cite[Sec. 4]{Million}.

\begin{thr}{\rm {\cite[Prop. 2]{CNT}}} \label{leygeneral}  Let $\mathfrak{g}$ be a $n$-dimensional non-model filiform Lie
algebra with associated triple $(z_1,z_2,n)$. Then, there exist an adapted basis $\{e_1, \ldots, e_n\}$ of $\fkg$
and some complex numbers $\alpha_i$, $\gamma_j$  and $\beta_{k \ell}$, with $1 \leq i \leq z_2-z_1+1$, $1 \leq j \leq n-z_2-1$,
$2 \leq \ell \leq n-z_{2}$, and $1 \leq k < z_{2}-z_{1}+\ell$, such that
\begin{align*} [e_1,e_h]&= e_{h-1} \mbox{ \rm{ for }} 3 \leq h \leq n, \\
[e_{z_1+i},e_{z_2+1}]&=\alpha_1 e_{i+2}+\alpha_2 e_{i+1}+\cdots+\alpha_{i+1} e_2 \,\, \mbox{ \rm{ for }} 0 \leq i\leq z_2-z_1, \\
[e_{z_1},e_{z_2+j}]&=\alpha_1 e_{j+1}+\gamma_1\, e_j+ \cdots+\gamma_{j-1}\,e_2 \,\,  \mbox{ \rm{ for }} 2 \leq j \leq n-z_2,  \\
[e_{z_{1}+k},e_{z_{2}+\ell}]&= \sum_{h=2}^{k+\ell} P_h\left([e_{z_{1}+k-1},e_{z_{2}+\ell}] +[e_{z_{1}+k},e_{z_{2}+\ell-1}]\right) e_{h+1}+ \beta_{k \ell} \, e_2, \\ &\phantom{= }{\mbox{\rm{\ for }}} 2 \leq \ell \leq n-z_{2},\,\,  1 \leq k < z_{2}-z_{1}+\ell.\\
\end{align*}
\end{thr}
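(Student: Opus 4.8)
The plan is to fix once and for all an adapted basis $\{e_1,\dots,e_n\}$ of $\fkg$ (one exists by \cite[Sec.~4.2]{VE}) and to prove that, with respect to \emph{this} basis, the bracket already has the stated shape for a suitable choice of the scalars; no further change of basis will be needed. With an adapted basis the relations \eqref{eq2} hold, so the only unknowns are the brackets $[e_i,e_j]$ with $4\le i<j\le n$. By \eqref{coroderivada} together with $[C^k\fkg,C^\ell\fkg]\subseteq C^{k+\ell}\fkg$ one has $[e_i,e_j]\in C^{2n-i-j+2}\fkg=\langle e_2,\dots,e_{i+j-n-1}\rangle$; in particular each such bracket lies in $\langle e_2,\dots,e_{n-1}\rangle$ and vanishes as soon as $i+j\le n+1$. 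The whole argument is driven by the Jacobi identity on the triple $(e_1,e_i,e_j)$ with $3\le i,j$, which, using $[e_1,e_m]=e_{m-1}$, reads
\[
[e_1,[e_i,e_j]]=[e_{i-1},e_j]+[e_i,e_{j-1}].
\]
Since ${\rm ad}(e_1)$ sends $e_m\mapsto e_{m-1}$ for $m\ge 3$ and annihilates $e_1$ and $e_2$, this is equivalent to the scalar identities $P_m([e_i,e_j])=P_{m-1}\big([e_{i-1},e_j]+[e_i,e_{j-1}]\big)$ for every $m\ge 3$; hence each bracket is completely determined by the two preceding ones on the antidiagonal $i+j$ \emph{except} for its $e_2$-coordinate, which ${\rm ad}(e_1)$ cannot see. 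Let me stress that, $\fkg$ being a Lie algebra, no Jacobi relation other than the ones with $e_1$ has to be checked: we are only describing a given law.

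The first step is the following pair of vanishing statements, which is where the invariants enter. (a) If $2\le a<b\le n$ and $a\le z_1-1$ then $[e_a,e_b]=0$: one inducts on $a$ from $2$ to $z_1-1$, the cases $a\le 3$ being \eqref{eq2}; for $4\le a\le z_1-1$ one starts from $[e_a,e_n]=0$ (the description $z_1=\min\{k\ge 4\mid[e_k,e_n]\ne 0\}$) and descends on $b$ from $n$ via $0=[e_1,[e_a,e_{b+1}]]=[e_{a-1},e_{b+1}]+[e_a,e_b]$, the first summand vanishing by the induction on $a$. (b) If $2\le a<b\le n$ and $b\le z_2$ then $[e_a,e_b]=0$: one inducts on $b-a$, the case $b-a=1$ being the description $z_2=\min\{k\ge 4\mid[e_k,e_{k+1}]\ne 0\}$ (together with \eqref{eq2} for $a\le 3$), and for $b-a\ge 2$ one uses $0=[e_1,[e_{a+1},e_b]]=[e_a,e_b]+[e_{a+1},e_{b-1}]$, where $[e_{a+1},e_b]=0$ and $[e_{a+1},e_{b-1}]=0$ by the inductive hypothesis. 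Combining (a) and (b), the only brackets that can be nonzero are the $[e_{z_1+k},e_{z_2+\ell}]$ with $k\ge 0$, $1\le\ell\le n-z_2$ and $k<z_2-z_1+\ell$, and these split precisely into the three nontrivial families of the statement: $\ell=1$ gives the second, $k=0$ gives the third, and $k,\ell\ge 1$ gives the fourth.

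It then remains to determine the shape of each of these brackets, by induction on $s=k+\ell$ (equivalently on the antidiagonal). For $s=1$, the Jacobi identity gives $[e_1,[e_{z_1},e_{z_2+1}]]=[e_{z_1-1},e_{z_2+1}]+[e_{z_1},e_{z_2}]=0$ by (a)--(b), so $[e_{z_1},e_{z_2+1}]\in\langle e_2\rangle$, and we set $\alpha_1:=P_2([e_{z_1},e_{z_2+1}])$. For $s\ge 2$, the two brackets $[e_{z_1+k-1},e_{z_2+\ell}]$ and $[e_{z_1+k},e_{z_2+\ell-1}]$ appearing on the right lie on antidiagonal $s-1$, hence are already known (the boundary cases $k=0$ and $\ell=1$ reduce, by (a)--(b), to known or vanishing brackets); by the inductive hypothesis each of them has support contained in $\{2,\dots,s\}$, so the same holds for their sum $v$, and solving $[e_1,u]=v$ inside $\langle e_2,\dots,e_{n-1}\rangle$ forces
\[
[e_{z_1+k},e_{z_2+\ell}]=\sum_{h=2}^{k+\ell}P_h(v)\,e_{h+1}+P_2\big([e_{z_1+k},e_{z_2+\ell}]\big)\,e_2,
\]
the last coefficient being unconstrained. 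For the family $\ell=1$ the recursion carries the top coefficient back to $\alpha_1$, and putting $\alpha_{m+1}:=P_2([e_{z_1+m},e_{z_2+1}])$ produces the second family; for the family $k=0$ it again carries the top coefficient back to $\alpha_1$, and putting $\gamma_m:=P_2([e_{z_1},e_{z_2+m+1}])$ produces the third family; finally, putting $\beta_{k\ell}:=P_2([e_{z_1+k},e_{z_2+\ell}])$ gives the fourth family exactly as displayed.

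I expect the main obstacle to be the vanishing step (a)--(b): it is the only point at which $z_1$ and $z_2$ are really used, and its two halves need genuinely different inductions --- a column-by-column descent starting from $e_n$ for $z_1$, and an induction outward from the superdiagonal for $z_2$. The one delicate point in the rest of the argument, and the place where \cite[Prop.~2]{CNT} is corrected, is that ${\rm ad}(e_1)$ is injective only modulo $\langle e_2\rangle$; this is exactly why the $e_2$-coefficients $\alpha_i,\gamma_j,\beta_{k\ell}$ must be retained as free parameters instead of being computed from the recursion. The leftover verifications (matching the index ranges $1\le i\le z_2-z_1+1$, $1\le j\le 2n-z_1-z_2-2$, etc., reconciling the support bound $k+\ell+1$ with the one coming from \eqref{coroderivada} and \eqref{Bound}, and checking that the overlapping families agree) are routine bookkeeping.
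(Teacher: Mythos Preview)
Your argument is correct and follows the same route as the paper, which simply defers to \cite[Prop.~2]{CNT} together with the observation that the $e_2$--coefficient of $[e_{z_1+k},e_{z_2+\ell}]$ is a free parameter $\beta_{k\ell}$ rather than one of the $\alpha_i$ or $\gamma_j$; you identify this point explicitly and correctly as the fact that ${\rm ad}(e_1)$ is injective on $\langle e_2,\dots,e_{n-1}\rangle$ only modulo $\langle e_2\rangle$. Your vanishing step (a) is exactly \cite[Lemma~4]{HOUS}, which the paper also invokes, and the recursion $P_h([e_i,e_j])=P_{h-1}([e_{i-1},e_j]+[e_i,e_{j-1}])$ you derive is the paper's Remark~\ref{recursivePh}; so there is no substantive difference in approach, only that you have written out in full what the paper leaves to the reference.
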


\begin{proof}

The proof of this theorem follows the one of \cite[Prop. 2]{CNT} bearing in mind that at the end of that proof, the coefficient $\beta_{k \ell}$ of $e_2$ in the bracket $[e_{z_1+k},e_{z_2+\ell}]$ is in general different from the coefficients $\{\alpha_i\}$ and $\{\gamma_j\}$.

\end{proof}

\begin{rmr}
Notice that the Jacobi identity induces quadratic relations among the parameters $\alpha_i$, $\beta_{k\ell}$ and $\gamma_j$. These quadratic relations define a certain non-empty Zariski closed set $\mathcal Z$ in the affine space $\C^\mu$ of the parameters. Here the number of parameters is given by
$$\mu = (z_2-z_1+1) + (n-z_2-1) + \frac{(n-z_2-1)(n+z_2-2z_1)}{2}=\frac{(n-z_2)(n+z_2-2z_1+1)}{2}.$$ Moreover, the Lie algebras in Theorem \ref{leygeneral} must be non-model and with fixed numerical invariants $(z_1,z_2,n)$. These last conditions define a certain non-empty Zariski open set $\mathcal{F}$ of the previous closed set $\mathcal Z$. See Remark \ref{dim} for a comment on the dimension of this quasi-affine variety.
\end{rmr}

\begin{notation}\label{Fabc}

Notice that, by definition,  the Lie algebras in the family $\mathcal{F}$ have a fixed associated triple $(z_1,z_2,n)$.

According to Theorem \ref{leygeneral}, the set of laws of $n$--dimensional non-model filiform Lie algebras is the disjoint union of the sets $\mathcal{F}$ when $(z_1,z_2,n)$ varies in the index set defined by inequalities (\ref{Bound}); that is, when $4 \leq z_1 \leq z_2 < n \leq 2z_2-2.$

\end{notation}

\begin{rmr}
Let us notice that Theorem \ref{leygeneral} implies Theorem \ref{Teorema-de-Bratz} for non-model filiform Lie algebras.
\end{rmr}

\begin{rmr}\label{recursivePh}
According to Notation \ref{Ph} and Theorem \ref{leygeneral}, the linear maps $P_h$ satisfy the following relation
$$P_h([e_i,e_j])=P_{h-1}([e_{i-1},e_j]+[e_{i},e_{j-1}]) \quad {\rm for} \, \,\, 1 < i < j \leq n.$$
Previous equality will be used in the next section.
\end{rmr}

\section{Main results}
\label{Main}

This section contains the main new results of this work. We state four Lemmas that are used to prove that the derived length of a family of finite dimensional filiform Lie algebra is at most $3$ (see Theorem \ref{solvability-index-leq4}). This family will be described in Notation \ref{Fac}.

We start with Lemma \ref{cason-2} in order to study the case where $z_2 = n-2$.

\begin{lmm}\label{cason-2}

Let $\mathfrak{g}$ be a $n$-dimensional non-model filiform Lie algebra with $z_2=n-2$. Then $n \geq 6$ and $\fkg$ has derived length $3$.

\end{lmm}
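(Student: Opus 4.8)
The plan is to exploit Theorem~\ref{leygeneral} with the specialization $z_2 = n-2$, which forces the index ranges to collapse dramatically. First I would record that the inequalities~(\ref{Bound}) with $z_2 = n-2$ give $4 \leq z_1 \leq z_2 = n-2 < n \leq 2z_2-2 = 2n-6$, so $n \geq 6$; this is the first assertion. Since $z_2 = n-2$ we have $n - z_2 = 2$, so in Theorem~\ref{leygeneral} the parameter $\ell$ runs only over the single value $\ell = 2$, the index $j$ in $[e_{z_1},e_{z_2+j}]$ runs only over $j = 2$ (that is, the single bracket $[e_{z_1}, e_n]$), and the bracket relations become very short. Concretely, the law is determined by: $[e_1,e_h] = e_{h-1}$; the brackets $[e_{z_1+i}, e_{n-1}] = \alpha_1 e_{i+2} + \cdots + \alpha_{i+1} e_2$ for $0 \leq i \leq z_2 - z_1$; the single bracket $[e_{z_1}, e_n] = \alpha_1 e_3 + \gamma_1 e_2$; and the brackets $[e_{z_1+k}, e_n]$ for $1 \leq k \leq z_2 - z_1 + 1$, each obtained from the recursion $P_h([e_i,e_j]) = P_{h-1}([e_{i-1},e_j] + [e_i,e_{j-1}])$ of Remark~\ref{recursivePh} plus a free coefficient $\beta_{k2}$ on $e_2$.

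Next I would show the derived length is at most $3$, i.e. $D^3 \fkg = \{0\}$. Using~(\ref{coroderivada}), $D\fkg = C^2\fkg = \langle e_2, \ldots, e_{n-1}\rangle$. To compute $D^2\fkg = [D\fkg, D\fkg]$ one notes that any bracket $[e_r, e_s]$ with $2 \leq r < s \leq n-1$ is nonzero only when $r \in \{z_1, z_1+1, \ldots, z_2\} = \{z_1, \ldots, n-2\}$ and $s = n-1$ (the second family of brackets in Theorem~\ref{leygeneral} with $\ell = 2$), together with the already-excluded $[e_1, \cdot]$ which does not occur here since $e_1 \notin D\fkg$. Hence $D^2\fkg \subseteq \langle e_2, e_3, \ldots, e_{z_2 - z_1 + 3}\rangle = \langle e_2, \ldots, e_{n-z_1+1}\rangle$, which is contained in $C^{z_1 - 1}\fkg$. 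Crucially, since $z_1 \geq 4$, this space lies inside $\langle e_2, \ldots, e_{n-3}\rangle = C^4\fkg$, and more to the point it is contained in the centralizer of $D\fkg$: every basis vector $e_m$ with $m \leq n - z_1 + 1$ satisfies $[e_m, e_s] = 0$ for all $s \leq n-1$ except possibly when $m \geq z_1$, i.e. $m \geq z_1$ and $m \leq n - z_1 + 1$, forcing $2z_1 \leq n+1$; I would check directly that even in that overlap range the brackets $[e_m, e_{n-1}]$ land inside the center $\langle e_2\rangle$ or vanish, using that the output index of $[e_{z_1+i}, e_{n-1}]$ is at most $i+2 \leq z_2 - z_1 + 2 = n - z_1$, and iterating once more kills everything. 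Thus $[D^2\fkg, D\fkg] = \{0\}$, so $D^3\fkg = [D^2\fkg, D^2\fkg] = \{0\}$.

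Finally I would rule out derived length $\leq 2$, i.e. show $D^2\fkg \neq \{0\}$, so that $\fkg$ is not metabelian; combined with the above this pins the derived length at exactly $3$. By Remark~\ref{cason-1}, $\fkg$ is metabelian if and only if $z_2 = n-1$, and here $z_2 = n-2 \neq n-1$, so $D^2\fkg \neq \{0\}$ immediately. The main obstacle in the write-up is the bookkeeping in the second step: one must carefully track, via the recursion of Remark~\ref{recursivePh} and the bound $n \leq 2z_2 - 2 = 2n-6$ (equivalently $z_1 \leq z_2 = n-2$ together with $z_1 \geq 4$), which basis vectors can appear in $D^2\fkg$ and verify that bracketing them once more against $D\fkg$ produces only multiples of $e_2$, which is central and hence killed by a further bracket. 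Everything else is a direct consequence of Theorem~\ref{leygeneral}, the description~(\ref{coroderivada}) of the lower central series, and Remark~\ref{cason-1}.
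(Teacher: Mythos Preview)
Your overall structure matches the paper's: you use the inequalities~(\ref{Bound}) to get $n\geq 6$, you specialize Theorem~\ref{leygeneral} to $z_2=n-2$, you invoke Remark~\ref{cason-1} to rule out metabelianity, and you bound $D^2\fkg$ inside $\langle e_2,\ldots,e_{n-z_1+1}\rangle$ (the paper gets the slightly sharper $\langle e_2,\ldots,e_{n-z_1}\rangle$, but this difference is harmless). The parts that remain are fine.

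There is, however, a genuine gap in the step where you argue $D^3\fkg=\{0\}$. You attempt to prove the stronger statement $[D^2\fkg, D\fkg]=\{0\}$, and this is \emph{not true in general}. Concretely, take $n=10$, $z_1=4$, $z_2=8$ with $\alpha_1\neq 0$. Then $D^2\fkg$ contains $e_4$ (via $[e_6,e_9]=\alpha_1 e_4+\cdots$), while $e_9\in D\fkg$, and $[e_4,e_9]=\alpha_1 e_2\neq 0$. So $D^2\fkg$ does \emph{not} lie in the centralizer of $D\fkg$, contrary to what you assert. Your proposed check that in the overlap range the brackets ``land inside the center $\langle e_2\rangle$ or vanish'' fails already for $m=z_1+1$, where $[e_{z_1+1},e_{n-1}]=\alpha_1 e_3+\alpha_2 e_2$.

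The fix is the paper's much shorter observation: you have already shown $D^2\fkg\subseteq \langle e_2,\ldots,e_{n-z_1}\rangle$, and since $n-z_1\leq n-4 < z_2+1$, this subspace is contained in $\langle e_2,\ldots,e_{z_2}\rangle=C^{n-z_2+1}\fkg$, which is abelian by the very definition of $z_2$. Hence $D^3\fkg=[D^2\fkg,D^2\fkg]=\{0\}$ immediately, with no need to bracket against all of $D\fkg$.
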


\begin{proof}

First, the condition $n\geq 6$ follows from the fact that $\fkg$ is a non-model filiform Lie algebra and from inequalities (\ref{Bound}). Moreover, $\mathcal{D}^2 \fkg$ is nonzero since $z_2 =n-2$ (see Remark \ref{cason-1}).
According to Theorem \ref{leygeneral}, we have the following general law for the algebra $\mathfrak{g}$, with respect to a suitable adapted basis
\begin{align*} [e_1,e_h]&= e_{h-1}, \,\, 3 \leq h \leq n, \\
[e_{z_1+i},e_{n-1}]&=\alpha_1 e_{i+2}+\alpha_2 e_{i+1}+\cdots + \alpha_{i+1}e_2, \,\, 0 \leq i \leq n-z_1-2,\\
[e_{z_1},e_{n}]&=\alpha_1 e_{3}+\gamma_1 e_2,\\
[e_{z_1+k},e_{n}]&= \sum_{h=2}^{k+2} P_h([e_{z_1+k-1},e_{n}] +[e_{z_1+k},e_{n-1}])
e_{h+1}+ \beta_{k 2} \, e_2, \,\, 0< k < n-z_1.
\end{align*}

According to equality (\ref{coroderivada}), $D^2\fkg \subseteq \langle e_2, \ldots, e_{n-z_1} \rangle$ is abelian since $n-z_1 < n-1$ and then $D^3\fkg=\{0\}$.
\end{proof}

\begin{notation}\label{pqrRkSk}
We will use the following notations
$$\begin{array}{lll}
p=2n-z_1-2z_2-3 & q=n-z_2-2 & r=n-z_1-1\\ & & \\
\displaystyle{R_k=\biggr(\binom{r}{k}-\binom{r}{k-1}\biggr)\binom{q}{k}}& \displaystyle{S_k=\biggr(\binom{r}{k}-\binom{r}{k-1}\biggr)\binom{q}{k-1}} & \\
\end{array}$$
Given a real number $y$ the expression
$\lfloor y \rfloor$ denotes the largest integer less than or equal to $y$.
\end{notation}

For the next three Lemmas, we will refer to the coefficients $\alpha_i$, $\beta_{k \ell}$, $\gamma_j$ introduced  in Theorem \ref{leygeneral}. If no confusion arises we  use $\beta_{k,\ell}$ instead of $\beta_{k \ell}$. Moreover, in the proofs of Lemmas \ref{alpha1es0} and \ref{caso(z1+m,n-1,n)}, we will use the following property that is proved in \cite[Lemma 4]{HOUS}:
$$[e_i,e_j]=0 \quad {\rm for} \,\, 1 < i < z_1, \,\, 1 < j.$$

\begin{lmm}{\rm \cite[Prop. 6]{CNT}} \label{alpha1es0}
Let $\mathfrak{g}$ be a $n$-dimensional non-model filiform Lie algebra. If the derived algebra $\mathcal{D} \fkg$ is not abelian, then $[e_{z_1},e_{z_2+1}]=\alpha_1 e_2 = 0$.
\end{lmm}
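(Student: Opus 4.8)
The plan is to argue by contradiction, using the structure provided by Theorem~\ref{leygeneral} together with the vanishing property $[e_i,e_j]=0$ for $1<i<z_1$, $j>1$. Assume that $\alpha_1\neq 0$; I will show that under this assumption $\mathcal{D}\fkg$ is abelian, contradicting the hypothesis. The first step is to recall from Theorem~\ref{leygeneral} and equality~(\ref{coroderivada}) that $\mathcal{D}\fkg=C^2\fkg=\langle e_2,\ldots,e_{n-1}\rangle$, so $\mathcal{D}\fkg$ is abelian exactly when all brackets $[e_s,e_t]$ with $2\le s<t\le n-1$ vanish. Using the vanishing property, the only potentially nonzero brackets among $e_2,\ldots,e_{n-1}$ are those of the form $[e_{z_1+i},e_{z_2+\ell}]$ or $[e_{z_1+i},e_{z_2+1}]$ with all indices at most $n-1$, so the task reduces to showing that $\alpha_1\neq 0$ forces these brackets to be zero once both arguments lie in $\langle e_2,\ldots,e_{n-1}\rangle$.

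The key step is a dimension/index count. The bracket $[e_{z_1+i},e_{z_2+1}]=\alpha_1 e_{i+2}+\alpha_2 e_{i+1}+\cdots+\alpha_{i+1}e_2$ has leading term $\alpha_1 e_{i+2}$; when $\alpha_1\ne 0$ this bracket is nonzero precisely for the full admissible range $0\le i\le z_2-z_1$, and in particular $[e_{z_1},e_{z_2+1}]=\alpha_1 e_2\neq 0$. I would then exploit the recursion in Remark~\ref{recursivePh}, namely $P_h([e_i,e_j])=P_{h-1}([e_{i-1},e_j]+[e_i,e_{j-1}])$, to propagate the nonvanishing of $\alpha_1$ through the brackets $[e_{z_1},e_{z_2+j}]=\alpha_1 e_{j+1}+\gamma_1 e_j+\cdots$ and $[e_{z_1+k},e_{z_2+\ell}]$. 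The point is that if $\mathcal{D}\fkg$ were \emph{not} abelian, there would have to be a nonzero bracket $[e_a,e_b]$ with $a,b\le n-1$ and $a\ge z_1$; feeding this into the Jacobi identity with $e_1$ (which shifts indices down via $[e_1,e_h]=e_{h-1}$) together with the leading coefficient $\alpha_1$ forces a contradiction with the bound $n\le 2z_2-2$ from~(\ref{Bound}), because the nonvanishing would cascade beyond the index range allowed by $C^{n-1}\fkg=\langle e_2\rangle$ being one-dimensional.

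Concretely, I would: (i) write out the Jacobi identity $[[e_1,e_{z_2+\ell}],e_{z_1+k}]+[[e_{z_2+\ell},e_{z_1+k}],e_1]+[[e_{z_1+k},e_1],e_{z_2+\ell}]=0$ and read off, using $\alpha_1\ne 0$, that a nonzero bracket at level $(z_1+k,z_2+\ell)$ with $z_2+\ell\le n-1$ would produce a nonzero bracket at a strictly higher total index, eventually exceeding $n$; (ii) conclude that all such brackets with both indices $\le n-1$ vanish, so $\mathcal{D}\fkg$ is abelian; (iii) observe this contradicts the hypothesis. The main obstacle I anticipate is bookkeeping: making the cascade argument precise requires carefully tracking which $P_h$ coefficients are forced to be proportional to $\alpha_1$ versus independent parameters, and checking that the index $z_2+\ell$ being at most $n-1$ (rather than $n$) is what breaks the recursion — essentially the same computation that underlies the proof of \cite[Prop.~2]{CNT}, specialized to detect when the derived algebra closes up. An alternative, possibly cleaner route is to appeal directly to the fact that $\mathcal{D}^2\fkg\neq\{0\}$ means $z_2\le n-2$ (Remark~\ref{cason-1}), combine with~(\ref{Bound}) to get $z_1\le z_2\le n-2$ and $n\le 2z_2-2$, and then show that $\alpha_1\ne 0$ together with these inequalities forces $C^{z_1}\fkg\cap C_{\fkg}(C^2\fkg)$ to be too small, contradicting the definition of $z_1$; I would try the Jacobi-cascade version first and fall back on this centralizer argument if the bookkeeping becomes unwieldy.
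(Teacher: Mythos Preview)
Your proposal has a genuine gap in the ``cascade'' argument. The Jacobi identity you write down,
\[
[[e_1,e_{z_2+\ell}],e_{z_1+k}]+[[e_{z_2+\ell},e_{z_1+k}],e_1]+[[e_{z_1+k},e_1],e_{z_2+\ell}]=0,
\]
is exactly the relation $[e_1,[e_{z_1+k},e_{z_2+\ell}]]=[e_{z_1+k-1},e_{z_2+\ell}]+[e_{z_1+k},e_{z_2+\ell-1}]$, i.e.\ the recursion of Remark~\ref{recursivePh}. This is already built into the parametrisation of Theorem~\ref{leygeneral}: it is how the brackets $[e_{z_1+k},e_{z_2+\ell}]$ are \emph{defined} in terms of $\alpha_i,\gamma_j,\beta_{k\ell}$, and hence it imposes no additional constraint on $\alpha_1$. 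Moreover the cascade runs in the wrong direction: applying $\mathrm{ad}\,e_1$ lowers indices by one, so knowing a nonzero bracket at $(z_1+k,z_2+\ell)$ produces information about brackets at \emph{lower}, not higher, total index; there is no overflow beyond $n$ to exploit. Your alternative centralizer route is not fleshed out and there is no evident mechanism by which $\alpha_1\neq 0$ alone would shrink $C_\fkg(C^2\fkg)$.

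The paper's proof proceeds quite differently. One must use a Jacobi identity \emph{not} involving $e_1$; the paper takes $J(e_{z_1},e_{n-1},e_n)=0$. Writing out the three double brackets via Theorem~\ref{leygeneral} and extracting the coefficient of the top basis vector $e_{p+4}$ (with $p=2n-z_1-2z_2-3$), one finds, using the Vandermonde-type identity $P_{n-z_2+1}([e_{z_1+k},e_{n-k}])=\binom{q+1}{k}\alpha_1$, that this coefficient equals
\[
\alpha_1^2\left\{\binom{p+2}{r-z_2+1}-\binom{q+r}{r-1}-\binom{p+2}{r-z_2+2}\right\}
=\alpha_1^2\left\{-\frac{z_1-2}{q+1}\binom{p+2}{q}-\binom{q+r}{r-1}\right\},
\]
which is a nonzero multiple of $\alpha_1^2$ since $z_1\ge 4$. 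Hence $\alpha_1=0$. The key idea you are missing is that only Jacobi identities among three vectors with indices $\ge z_1$ yield genuine quadratic constraints on the parameters; the ones involving $e_1$ are tautologies of the adapted-basis presentation.
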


\begin{proof}
We give here an alternative proof to the one in  \cite[Prop. 6]{CNT}, which is related to other results of this section. We assume that $\fkg$ is a non-model filiform Lie algebra with associated triple $(z_1,z_2,n)$. Let us consider the Jacobi identity $$J(e_{z_1},e_{n-1},e_n) =  [[e_{z_1},e_{n-1}],e_n]+[[e_{n-1},e_{n}],e_{z_1}]+[[e_{n},e_{z_1}],e_{n-1}] = 0.$$
Notice first that by Theorem \ref{leygeneral}, $$[e_{z_1},e_{n-1}]=\alpha_1 e_{n-z_2}+\gamma_1 e_{n-z_2-1}+ \cdots + \gamma_{n-z_2-2}e_2, \qquad [e_{z_1},e_{z_2+1}]=\alpha_1 \, e_2.$$
Therefore the first term of the Jacobi identity is given by $$[[e_{z_1},e_{n-1}],e_n]=\alpha_1 [e_{n-z_2},e_n]+\gamma_1 [e_{n-z_2-1},e_n]+ \cdots + \gamma_{n-z_2-2}[e_2,e_n]=$$
$$\alpha_1 [e_{n-z_2},e_n]+\gamma_1 [e_{n-z_2-1},e_n]+ \cdots + \gamma_{n-z_2-z_1}[e_{z_1},e_n].$$

Next, for the second term, we have
$$[e_{n-1},e_n]=\sum_{h=2}^{q+r+2} P_h([e_{n-2},e_n])e_{h+1} + \beta_{n-1-z_1,n-z_2} \, e_2,$$ so
$$[e_{z_1},[e_{n-1},e_n]]=\sum_{h=2}^{q+r+2} P_h([e_{n-2},e_n])[e_{z_1},e_{h+1}] + \beta_{n-1-z_1,n-z_2} \, [e_{z_1},e_2] $$
$$=\sum_{h=z_1}^{q+r+2} P_h([e_{n-2},e_n])[e_{z_1},e_{h+1}].$$
Finally, for the last term, we obtain
$$[e_{z_1},e_{n}]=\alpha_1 e_{n-z_2+1}+\gamma_1 e_{n-z_2}+\cdots+\gamma_{n-z_2-1}e_2$$
$$[[e_{z_1},e_n],e_{n-1}]=\alpha_1 [e_{n-z_2+1},e_{n-1}]+\gamma_1 [e_{n-z_2},e_{n-1}]+\cdots+\gamma_{n-z_2-1}[e_2,e_{n-1}]]=$$
$$\alpha_1 [e_{n-z_2+1},e_{n-1}]+\gamma_1 [e_{n-z_2},e_{n-1}]+\cdots+\gamma_{n-z_1-z_2+1}[e_{z_1},e_{n-1}].$$
In this way, $J(e_{z_1},e_{n-1},e_n)=0$ is equivalent to
$$\alpha_1 [e_{n-z_2},e_n]+\gamma_1 [e_{n-z_2-1},e_n]+ \cdots + \gamma_{n-z_2-z_1}[e_{z_1},e_n]-\sum_{h=z_1}^{q+r+2} P_h([e_{n-2},e_n])[e_{z_1},e_{h+1}]$$
$$-\alpha_1 [e_{n-z_2+1},e_{n-1}]-\gamma_1 [e_{n-z_2},e_{n-1}]-\cdots-\gamma_{n-z_1-z_2+1}[e_{z_1},e_{n-1}] = 0.$$
Since $\mathcal{D} \fkg$ is not abelian, $z_2 \leq n-2$ and, therefore, $z_1<n-1$. Consequently, $q \geq 0$ and $p,r \geq 1$, where $p, q$ and $r$ were introduced in Notation \ref{pqrRkSk}. Notice also that $p+4 \leq n$.
The coefficient of $e_{p+4}$ in the Jacobi identity $J(e_{z_1},e_{n-1},e_n)=0$ is given by
$$\alpha_1 \biggr\{ P_{p+3}([e_{q+1},e_n]+[e_{q+2},e_{n-1}])- P_{q+r+2}([e_{n-2},e_n])-$$
$$ P_{p+3}([e_{q+2},e_{n-1}]+[e_{q+1},e_{n-2}])\biggr\}=$$
$$\alpha_1 \biggr\{ P_{p+4}([e_{q+2},e_n])-P_{q+r+2}([e_{n-2},e_n])-P_{p+4}([e_{q+3},e_{n-1}])\biggr\}.$$

Now, from the recursive definition of the linear maps $P_h$ (see Remark \ref{recursivePh}) and Vandermonde's identity for combinatorial numbers, we obtain
$$P_{n-z_2+1}([e_{z_1+k},e_{n-k}])=\binom{q+1}{k} \alpha_1.$$
Consequently,
$$P_{p+4}([e_{q+2},e_n])=   \binom{p+2}{r-z_2+1} \alpha_1$$
$$P_{q+r+2}([e_{n-2},e_n])=   \binom{q+r}{r-1} \alpha_1$$
$$P_{p+4}([e_{q+3},e_{n-1}])=  \binom{p+2}{r-z_2+2} \alpha_1.$$
The previous equalities only hold if the corresponding Lie brackets are non-zero.
We conclude that the coefficient of $e_{p+4}$ in the Jacobi identity is given by
$$\alpha_1^2 \biggr\{ \binom{p+2}{r-z_2+1}-\binom{q+r}{r-1}- \binom{p+2}{r-z_2+2}\biggr\}=$$
$$\alpha_1^2 \biggr\{-\frac{z_1-2}{q+1} \binom{p+2}{q}-\binom{q+r}{r-1}\biggr\}.$$
The previous expression is zero if and only if $\alpha_1=0$.

\end{proof}

\begin{rmr}

The converse of Lemma \ref{alpha1es0} is also true. Let $\fkg$ be a $n$-dimensional non-model filiform Lie algebra associated with the triple $(z_1,z_2,n)$ and verifying $[e_{z_1},e_{z_2+1}] =0$. According to the definition of the invariants $z_1,z_2$ and Theorem \ref{leygeneral}, $z_2 \leq n-2$ and the following bracket must be non-zero
$$[e_{z_2},e_{z_2+1}]=\alpha_2 e_{z_2-z_1+1}+\cdots + \alpha_{z_2-z_1+1}e_2$$
Therefore, $\mathcal{D} \fkg$ is not abelian.

\end{rmr}

\begin{notation} \label{ambmcm} We recall here the expressions introduced in Notation \ref{pqrRkSk} and introduce some more useful notations
$$\begin{array}{ll} a_m = & {\binom{m+q-1}{m} \binom{m+p}{q}-\binom{m+q}{m} \binom{m+p}{q-1}
-\binom{p+m}{m}\displaystyle\sum_{k=0}^{\lfloor\frac{r}{2}\rfloor} R_k} ,\\ & \\
b_m=& {\binom{m+q-1}{m} \binom{m+p}{q+1}-\binom{m+q-1}{m-1} \binom{m+p}{q}-\binom{m+q}{m} \binom{m+p}{q}} \\ & \\ & {
-\binom{m+q}{m-1} \binom{m+p}{q-1}- \binom{m+p}{m-1} \sum_{k=0}^{\lfloor\frac{r}{2}\rfloor} R_k - \binom{m+p}{m} \displaystyle\sum_{k=0}^{\lfloor\frac{r}{2}\rfloor} S_k} ,\\ & \\
c_m= & \binom{m+q-1}{m-1}\binom{m+p}{q+1}-\binom{m+q}{m-1}\binom{m+p}{q}- \binom{m+p}{m-1}
\displaystyle\sum_{k=0}^{\lfloor\frac{r}{2}\rfloor} S_k , \\
s= &\min\{q+1,r-q-1\}. \\
\end{array} $$
\end{notation}

\begin{lmm}\label{caso(z1+m,n-1,n)}

Let $\fkg$ be a $n$-dimensional non-model filiform Lie algebra associated with the triple $(z_1,z_2,n)$, whose derived algebra $\mathcal{D} \,\fkg$ is not abelian. Then, for \, $m=0, \ldots, s$, the coefficient of the vector $e_{m+p+2}$, where $p=2n-z_1-2z_2-3$,  in the Jacobi identity $J(e_{z_1+m},e_{n-1},e_n)=0$  is
$$a_m \gamma_1^2+b_m \gamma_1 \alpha_2 + c_m \alpha_2^2.$$

\end{lmm}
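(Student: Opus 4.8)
The plan is to extract, term by term, the coefficient of $e_{m+p+2}$ in the Jacobi identity $J(e_{z_1+m},e_{n-1},e_n)=0$, exactly as was done for the special case $m=0$ in the proof of Lemma~\ref{alpha1es0}, but now keeping the contributions of $\gamma_1$ and $\alpha_2$ rather than $\alpha_1$. Since $\mathcal D\,\fkg$ is not abelian, Lemma~\ref{alpha1es0} gives $\alpha_1=0$, so by Theorem~\ref{leygeneral} the brackets involving $e_{z_1}$ simplify: $[e_{z_1},e_{n-1}]=\gamma_1 e_{n-z_2-1}+\cdots$ and $[e_{z_1},e_n]=\gamma_1 e_{n-z_2}+\cdots$, while $[e_{z_1+1},e_{z_2+1}]=\alpha_2 e_2$ and more generally $[e_{z_1+i},e_{z_2+1}]=\alpha_2 e_{i+1}+\cdots+\alpha_{i+1}e_2$. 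First I would write out $J(e_{z_1+m},e_{n-1},e_n)=[[e_{z_1+m},e_{n-1}],e_n]+[[e_{n-1},e_n],e_{z_1+m}]+[[e_n,e_{z_1+m}],e_{n-1}]=0$ and expand each of the three double brackets using the recursion for the $P_h$ (Remark~\ref{recursivePh}) and the general law, discarding all terms that cannot contribute to $e_{m+p+2}$.

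The core computational step is to evaluate $P_{m+p+2}$ of each relevant iterated bracket in closed form. As in Lemma~\ref{alpha1es0}, the recursion $P_h([e_i,e_j])=P_{h-1}([e_{i-1},e_j]+[e_i,e_{j-1}])$ together with Vandermonde's identity turns each such coordinate into a product of two binomial coefficients (with an extra factor $\gamma_1$ or $\alpha_2$ coming from the ``seed'' bracket at the bottom of the recursion). Concretely, one shows $P_{n-z_2}([e_{z_1+k},e_{n-k}])=\binom{q}{k}\gamma_1$ — the analogue of the formula $P_{n-z_2+1}([e_{z_1+k},e_{n-k}])=\binom{q+1}{k}\alpha_1$ used before — and an analogous formula with $\alpha_2$ for the brackets $[e_{z_1+k+1},e_{z_2+1+\ell}]$-type terms. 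Shifting indices from the base point $z_1+m$ instead of $z_1$ multiplies every binomial by a factor of the form $\binom{m+\ast}{m}$ or $\binom{m+\ast}{m-1}$ (again by Vandermonde), which is precisely why the definitions of $a_m,b_m,c_m$ in Notation~\ref{ambmcm} are built from the products $\binom{m+q-1}{m}\binom{m+p}{q}$, etc. The sums $\sum_{k=0}^{\lfloor r/2\rfloor}R_k$ and $\sum_{k=0}^{\lfloor r/2\rfloor}S_k$ arise from the middle term $[e_{z_1+m},[e_{n-1},e_n]]$: expanding $[e_{n-1},e_n]$ via the recursive formula in Theorem~\ref{leygeneral} produces a sum over $h$ of coefficients $P_h([e_{n-2},e_n])$, and collecting the ones that land on $e_{m+p+2}$ after bracketing with $e_{z_1+m}$ gives exactly the pairing $\bigl(\binom{r}{k}-\binom{r}{k-1}\bigr)\binom{q}{k}$ (resp. with $\binom{q}{k-1}$) — the sign alternation there is what forces the upper limit $\lfloor r/2\rfloor$ rather than $r$.

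Having computed the three groups of contributions, I would collect them: the first double bracket $[[e_{z_1+m},e_{n-1}],e_n]$ and the third $[[e_n,e_{z_1+m}],e_{n-1}]$ each contribute a mix of $\gamma_1^2$, $\gamma_1\alpha_2$ and $\alpha_2^2$ terms (since, e.g., $[e_{z_1+m},e_{n-1}]$ carries a $\gamma_1$ and then bracketing with $e_n$ can hit a further $\gamma_1$ or, via $[e_{z_1+i},e_{z_2+1}]$, an $\alpha_2$), while the middle term contributes the $\gamma_1\cdot\sum R_k$ and $\alpha_2\cdot\sum S_k$ pieces. Summing and regrouping by the monomials $\gamma_1^2,\ \gamma_1\alpha_2,\ \alpha_2^2$ yields precisely $a_m\gamma_1^2+b_m\gamma_1\alpha_2+c_m\alpha_2^2$ with $a_m,b_m,c_m$ as in Notation~\ref{ambmcm}. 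The main obstacle is purely bookkeeping: correctly tracking which iterated brackets survive to contribute to the fixed target vector $e_{m+p+2}$ and getting every binomial index (and every $\binom{m+\ast}{m}$ versus $\binom{m+\ast}{m-1}$ shift) right — there is no conceptual difficulty beyond repeated use of Vandermonde's identity, but the index arithmetic is delicate and is exactly where the earlier paper \cite{CNT} made its error. I would therefore organize the computation so that the $m=0$ case reduces visibly to the already-checked formula from Lemma~\ref{alpha1es0} (specialized via $\alpha_1=0$), using that as a consistency check before trusting the general $m$.
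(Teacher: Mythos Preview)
Your proposal is correct and follows essentially the same route as the paper's own proof: use Lemma~\ref{alpha1es0} to set $\alpha_1=0$, establish the key identity $P_{n-z_2}([e_{z_1+k},e_{n-k}])=\binom{q}{k}\gamma_1+\binom{q}{k-1}\alpha_2$ via the recursion in Remark~\ref{recursivePh} and Vandermonde, expand the three summands of $J(e_{z_1+m},e_{n-1},e_n)$, and read off the coefficient of $e_{m+p+2}$ as a quadratic form in $\gamma_1,\alpha_2$. One small clarification: the sums $\sum R_k$ and $\sum S_k$ do not come from summing over several values of $h$ in the expansion of $[e_{n-1},e_n]$ --- only the top index $h=q+r+1$ contributes to $e_{m+p+2}$, and those sums appear instead when you recursively compute the single coefficient $P_{q+r+1}([e_{n-2},e_n])$.
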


\begin{proof}
First, from Lemma \ref{alpha1es0}, we can suppose that $\alpha_1=0$ in Theorem \ref{leygeneral}. Next, by using that theorem, Remark \ref{recursivePh} and Vandermonde's identity for combinatorial numbers, we obtain the following relation
\begin{equation}\label{comb2}
P_{n-z_2}([e_{z_1+k},e_{n-k}])=\binom{q}{k} \gamma_1+\binom{q}{k-1} \alpha_2
\end{equation}
for $0 \leq k \leq n-1$. Now, let us consider the Jacobi identity $$J(e_{z_1+m},e_{n-1},e_n) =  [[e_{z_1+m},e_{n-1}],e_n]+[[e_{n-1},e_{n}],e_{z_1+m}]+[[e_{n},e_{z_1+m}],e_{n-1}] = 0.$$
Notice first that from Theorem \ref{leygeneral} we have $$[e_{z_1+m},e_{n-1}]=\sum_{h=2}^{m+q}P_h([e_{z_1+m-1},e_{n-1}]+[e_{z_1+m},e_{n-2}])e_{h+1}+\beta_{m,n-z_2-1}e_2$$ and then  $$[[e_{z_1+m},e_{n-1}],e_n]=\sum_{h=2}^{m+q}P_h([e_{z_1+m-1},e_{n-1}]+[e_{z_1+m},e_{n-2}])[e_{h+1},e_n]+\beta_{m,n-z_2-1}[e_2,e_n]$$
$$=\sum_{h=z_1-1}^{m+q}P_h([e_{z_1+m-1},e_{n-1}]+[e_{z_1+m},e_{n-2}])[e_{h+1},e_n].$$

Next, for the second term, we have
$$[e_{n-1},e_n]=\sum_{h=2}^{q+r+1} P_h([e_{n-2},e_n])e_{h+1} + \beta_{n-z_1-1,n-z_2} \, e_2,$$ so
$$[e_{z_1+m},[e_{n-1},e_n]]=\sum_{h=2}^{q+r+1} P_h([e_{n-2},e_n])[e_{z_1+m},e_{h+1}] + \beta_{n-z_1-1,n-z_2} \, [e_{z_1+m},e_2] $$
$$=\sum_{h=z_1}^{q+r+1} P_h([e_{n-2},e_n])[e_{z_1+m},e_{h+1}].$$

Finally, for the last term, we obtain
$$[e_{z_1+m},e_{n}]=\sum_{h=2}^{m+q+1}P_h([e_{z_1+m-1},e_{n}]+[e_{z_1+m},e_{n-1}])e_{h+1}+\beta_{m,n-z_2}e_2$$
$$[[e_{z_1+m},e_n],e_{n-1}]=\sum_{h=2}^{m+q+1}P_h([e_{z_1+m-1},e_{n}]+[e_{z_1+m},e_{n-1}])[e_{h+1},e_{n-1}]+\beta_{m,n-z_2}[e_2,e_{n-1}]$$
$$=\sum_{h=z_1-1}^{m+q+1}P_h([e_{z_1+m-1},e_{n}]+[e_{z_1+m},e_{n-1}])[e_{h+1},e_{n-1}].$$

Consequently, Jacobi identity $J(e_{z_1+m},e_{n-1},e_n)=0$ is equivalent to
$$\sum_{h=z_1-1}^{m+q}P_h([e_{z_1+m-1},e_{n-1}]+[e_{z_1+m},e_{n-2}])[e_{h+1},e_n]-\sum_{h=z_1}^{q+r+1} P_h([e_{n-2},e_n])[e_{z_1+m},e_{h+1}]$$
$$-\sum_{h=z_1-1}^{m+q+1}P_h([e_{z_1+m-1},e_{n}]+[e_{z_1+m},e_{n-1}])[e_{h+1},e_{n-1}]=0.$$
According to equality (\ref{comb2}), we obtain that
$$P_{m+q+1}([e_{z_1+m},e_{n-1}]=\binom{m+q-1}{m} \gamma_1+\binom{m+q-1}{m-1} \alpha_2$$
$$P_{m+p+2}([e_{m+q+1},e_n])=\binom{m+p}{q}\gamma_1+\binom{m+p}{q+1} \alpha_2$$
$$P_{m+q+2}([e_{z_1+m},e_{n}]=\binom{m+q}{m}\gamma_1+\binom{m+q}{m-1}\alpha_2$$
$$P_{m+p+2}([e_{m+q+2},e_{n-1}])=\binom{m+p}{q-1}\gamma_1+\binom{m+p}{q}\alpha_2.$$
Next, by using equality (\ref{comb2}) and combinatorics, it is easy to prove that
$$P_{q+r+1}([e_{n-2},e_n])=
\gamma_1 \displaystyle\sum_{k=0}^{\lfloor\frac{r}{2}\rfloor} R_k+\alpha_2\displaystyle\sum_{k=0}^{\lfloor\frac{r}{2}\rfloor} S_k$$ and
$$P_{m+p+2}([e_{z_1+m},e_{q+r+2}])=\binom{m+p}{m}\gamma_1+\binom{p+m}{m-1}\alpha_2.$$
Notice that, by the choice of $s$, all the subindexes in the previous expressions of $P_h$ are bounded above by $n$.
Consequently, the coefficient of $e_{m+p+2}$ in the first term of the Jacobi identity is given by
$$\biggr\{\binom{m+q-1}{m} \gamma_1+\binom{m+q-1}{m-1} \alpha_2\biggr\}
\biggr\{\binom{m+p}{q}\gamma_1+\binom{m+p}{q+1} \alpha_2\biggr\},$$
while the coefficient of $e_{m+p+2}$ in the second term is
$$-\biggr\{\gamma_1 \displaystyle\sum_{k=0}^{\lfloor\frac{r}{2}\rfloor} R_k+\alpha_2\displaystyle\sum_{k=0}^{\lfloor\frac{r}{2}\rfloor} S_k\biggr\} \biggr\{ \binom{m+p}{m}\gamma_1+\binom{m+p}{m-1}\alpha_2  \biggr\}$$
and the coefficient of $e_{m+p+2}$ in the third term is
$$-\biggr\{\binom{m+q}{m} \gamma_1+\binom{m+q}{m-1} \alpha_2\biggr\}
\biggr\{\binom{m+p}{q-1}\gamma_1+\binom{m+p}{q} \alpha_2\biggr\}.$$
Therefore, the coefficient of $e_{m+p+2}$ in the Jacobi identity $J(e_{z_1+m},e_{n-1},e_n)=0$ is as follows
$$\biggr\{\binom{m+q-1}{m} \gamma_1+\binom{m+q-1}{m-1} \alpha_2\biggr\}
\biggr\{\binom{m+p}{q}\gamma_1+\binom{m+p}{q+1} \alpha_2\biggr\}$$
$$-\biggr\{\gamma_1 \displaystyle\sum_{k=0}^{\lfloor\frac{r}{2}\rfloor} R_k+\alpha_2\displaystyle\sum_{k=0}^{\lfloor\frac{r}{2}\rfloor} S_k\biggr\} \biggr\{ \binom{m+p}{m}\gamma_1+\binom{m+p}{m-1}\alpha_2  \biggr\}$$
$$-\biggr\{\binom{m+q}{m} \gamma_1+\binom{m+q}{m-1} \alpha_2\biggr\}
\biggr\{\binom{m+p}{q-1}\gamma_1+\binom{m+p}{q} \alpha_2\biggr\}=$$

$$a_m \gamma_1^2 + b_m \gamma_1 \alpha_2 + c_m \alpha_2^2.$$

\end{proof}

\begin{notation}\label{Fac}
Recall that we have denoted by $\mathcal{F}$ the family of laws of (non-model) filiform Lie algebras with associated triple $(z_1,z_2,n)$, see Notation \ref{Fabc}.  We denote by $\mathcal{G}$ the intersection of $\mathcal{F}$ with the linear subspace defined by the set of equations $\beta_{k \ell}=\gamma_{k+\ell-1}$ \, if \, $k+\ell \leq n-z_2$. The family $\mathcal{G}$ is a non-empty open set in an affine space of dimension $n-z_1$ corresponding to the parameters $\{\alpha_i\}$ and $\{\gamma_j\}$.

In \cite[Table 1]{CNT}, there is a description of the family $\mathcal{G}$ up to dimension $14$.
\end{notation}

\begin{rmr}\label{wewillsee}
We will see in the proof of Theorem \ref{solvability-index-leq4} that if $(z_1,z_2,n)$ satisfies
$$4\leq z_1 \leq 2(n-z_2)-4 \qquad {\rm and} \qquad z_1 \leq z_2 \leq n-3 \leq 2z_2-5,$$
then ${\mathcal G}$ is the empty set, see Remark \ref{fabcempty}.
\end{rmr}

\begin{rmr}\label{dim}
As far as we know, there is no closed formula for the dimension of $\mathcal{G}$. Nevertheless, from the definition of $\mathcal{G}$ and using the affine dimension theorem inequality, we have the following inequality
$${\rm dim}(\mathcal{F}) \leq  {\rm dim}(\mathcal{G}) + \dfrac{(n-z_2-1)(n+z_2-2z_1)}{2}.$$
\end{rmr}

\begin{notation}\label{syk}
Recall that we have denoted $s=\min\{q+1,r-q-1\}$. In the next Lemma we are going to use $\kappa=\lfloor \frac{s+p}{2} \rfloor$.
\end{notation}

\begin{lmm}\label{lemageneralizaado}

Let $\fkg$ be a $n$-dimensional non-model filiform Lie algebra belonging to the family $\mathcal{G}$ and associated with the triple $(z_1,z_2,n)$, whose derived algebra $\mathcal{D}\,\fkg$ is not abelian and satisfying $\gamma_i=\alpha_{i+1}=0$, for $i=1, \ldots, \kappa$. Then, for $m=0, \ldots, s$, the coefficient of the vector  $e_{m+p+4-2\kappa}$ in the Jacobi identity $J(e_{z_1+m},e_{n-1},e_n)=0$  is
$$a_m \gamma_{\kappa}^2+b_m \gamma_{\kappa} \alpha_{{\kappa}+1} + c_m \alpha_{{\kappa}+1}^2$$
where $a_m, b_m$ and $c_m$ are defined in Notation  \ref{ambmcm}.

\end{lmm}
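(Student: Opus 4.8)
The plan is to repeat the computation carried out in the proof of Lemma~\ref{caso(z1+m,n-1,n)} (which is precisely the case $k=1$ once one restricts to the family $\mathcal{F}_{\alpha\gamma}$), keeping careful track of the downward index shift produced by the extra vanishing hypotheses.

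First I would invoke Lemma~\ref{alpha1es0}: since $\mathcal{D}\,\fkg$ is not abelian we have $\alpha_1=0$, so together with $\gamma_i=\alpha_{i+1}=0$ for $i=1,\dots,k-1$ the law of Theorem~\ref{leygeneral} satisfies $\alpha_1=\dots=\alpha_k=0$ and $\gamma_1=\dots=\gamma_{k-1}=0$. Substituting this into the first three bracket formulas of that theorem, the first possibly nonzero coefficient of $[e_{z_1+i},e_{z_2+1}]$ is that of $e_{i+2-k}$ (equal to $\alpha_{k+1}$) and the first possibly nonzero coefficient of $[e_{z_1},e_{z_2+j}]$ is that of $e_{j+1-k}$ (equal to $\gamma_k$); both sit $k-1$ positions below the corresponding leading terms in the case $k=1$. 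Here the hypothesis $\fkg\in\mathcal{F}_{\alpha\gamma}$ is used: the relations $\beta_{k'\ell'}=\gamma_{k'+\ell'-1}$ (valid when $k'+\ell'\le n-z_2$) force $\beta_{k'\ell'}=0$ as soon as $k'+\ell'\le k$, so the $e_2$-coefficients of the brackets involved also vanish in the relevant range. Using the recursion of Remark~\ref{recursivePh}, one then checks that the whole family of brackets entering $J(e_{z_1+m},e_{n-1},e_n)$ is obtained from the $k=1$ picture by the shift $e_h\mapsto e_{h-(k-1)}$, with the pair $(\gamma_k,\alpha_{k+1})$ now playing the role of $(\gamma_1,\alpha_2)$.

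Second, I would establish the shifted analogue of relation~(\ref{comb2}): by Remark~\ref{recursivePh} and Vandermonde's identity,
$$P_{n-z_2-(k-1)}\bigl([e_{z_1+j},e_{n-j}]\bigr)=\binom{q}{j}\gamma_k+\binom{q}{j-1}\alpha_{k+1},$$
together with the $(k-1)$-shifted versions of all the partial evaluations used in the proof of Lemma~\ref{caso(z1+m,n-1,n)}, in particular $P_{q+r+1-(k-1)}([e_{n-2},e_n])=\gamma_k\sum_{j=0}^{\lfloor r/2\rfloor}R_j+\alpha_{k+1}\sum_{j=0}^{\lfloor r/2\rfloor}S_j$ and the four binomial expressions for the appropriate $P$-coordinates of $[e_{z_1+m},e_{n-1}]$, $[e_{z_1+m},e_n]$, $[e_{m+q+1},e_n]$ and $[e_{m+q+2},e_{n-1}]$. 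The numbers $p,q,r,R_j,S_j$ of Notation~\ref{pqrRkSk} are unaffected by the shift. Expanding the three summands of $J(e_{z_1+m},e_{n-1},e_n)=0$ exactly as for $k=1$ and reading off the coefficient of $e_{m+p+4-2k}=e_{m+p+2-2(k-1)}$ — the extra factor $2$ in the shift reflecting the fact that this coefficient is bilinear in the shifted $P$-quantities — the three contributions become, term by term, those already computed in Lemma~\ref{caso(z1+m,n-1,n)} with $(\gamma_1,\alpha_2)$ replaced by $(\gamma_k,\alpha_{k+1})$. Their sum is $a_m\gamma_k^2+b_m\gamma_k\alpha_{k+1}+c_m\alpha_{k+1}^2$, with $a_m,b_m,c_m$ as in Notation~\ref{ambmcm}.

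I expect the main obstacle to be the structural reduction of the first step: one must verify rigorously that the vanishing hypotheses together with the relations $\beta_{k'\ell'}=\gamma_{k'+\ell'-1}$ really do produce a coherent shift by $k-1$, i.e.\ that no monomial involving a $\gamma_i$ or an $\alpha_{i+1}$ with $i<k$, and no $\beta_{k'\ell'}$ outside the controlled range, survives in the coefficient of $e_{m+p+4-2k}$. Once this is settled, the combinatorial identities of the second step are literally those already proved in Lemma~\ref{caso(z1+m,n-1,n)}, so no genuinely new computation is needed; it is essentially a matter of bookkeeping the index shift throughout.
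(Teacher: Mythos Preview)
Your approach is correct and is exactly what the paper intends: its proof of this lemma is the single line ``Similar to the proof of Lemma~\ref{caso(z1+m,n-1,n)}'', and you have spelled out precisely how that similarity works via the $(k-1)$-downward index shift with $(\gamma_k,\alpha_{k+1})$ playing the role of $(\gamma_1,\alpha_2)$. In fact you supply more detail than the paper does, correctly identifying that the $\mathcal{F}_{\alpha\gamma}$ hypothesis is needed to control the $\beta_{k'\ell'}$ terms.
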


\begin{proof}
Notice that we can assume $\kappa \geq 2$ since the cases $\kappa=0,1$ were dealt with in previous Lemmas. Moreover, the proof is similar to the one of Lemma \ref{caso(z1+m,n-1,n)}.

\end{proof}

\begin{thr}\label{solvability-index-leq4}

Any $n$-dimensional filiform Lie algebra in the family $\mathcal{G}$ has derived length at most $3$.

\end{thr}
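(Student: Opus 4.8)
The plan is to prove $D^3\fkg=0$ for every $\fkg\in\mathcal F_{\alpha\gamma}$; since a non-model filiform Lie algebra that is not metabelian has $D\fkg=C^2\fkg=\langle e_2,\dots,e_{n-1}\rangle$ (equality (\ref{coroderivada})) nonzero and non-abelian, hence $D^2\fkg\neq 0$, this already forces the derived length to be exactly $3$ in that case (in the metabelian case $z_2=n-1$ it is $2\leq 3$; cf.\ Remark \ref{cason-1}). I would first record two reductions: the case $z_2=n-1$ is metabelian and the case $z_2=n-2$ is Lemma \ref{cason-2}, so from now on $z_1\leq z_2\leq n-3$, whence $r=n-z_1-1\geq 2$. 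As $D\fkg$ is then non-abelian, Lemma \ref{alpha1es0} gives $\alpha_1=0$. The organising remark is that $\langle e_2,\dots,e_s\rangle$ is an abelian ideal exactly when $s\leq z_2$, because $C^{n-z_2+1}\fkg=\langle e_2,\dots,e_{z_2}\rangle$ is the largest abelian ideal of the lower central series; so everything reduces to the inclusion $D^2\fkg\subseteq\langle e_2,\dots,e_{z_2}\rangle$.

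Using $\alpha_1=0$ together with the law of Theorem \ref{leygeneral} and the recursion of Remark \ref{recursivePh}, a short induction on $k+\ell$ gives $[e_{z_1+k},e_{z_2+\ell}]\in\langle e_2,\dots,e_{k+\ell}\rangle$ for all admissible $k\geq 0$, $\ell\geq 1$. Since Theorem \ref{leygeneral} displays every nonzero bracket of basis vectors other than $[e_1,e_h]$ in this shape, and the brackets entering $D^2\fkg=[D\fkg,D\fkg]$ have $z_1+k\leq n-1$ and $z_2+\ell\leq n-1$, i.e.\ $k\leq r-1$ and $\ell\leq q+1$ with $q=n-z_2-2$, one gets $D^2\fkg\subseteq\langle e_2,\dots,e_{r+q}\rangle$, where $r+q=2n-z_1-z_2-3=z_2+p$ and $p=2n-z_1-2z_2-3$ as in Notation \ref{pqrRkSk}. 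Hence when $p\leq 0$ (equivalently $z_1\geq 2(n-z_2)-3$) one already has $D^2\fkg\subseteq\langle e_2,\dots,e_{z_2}\rangle$, so $D^3\fkg=0$ and this subcase is settled.

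For the remaining subcase $p\geq 1$ I expect the statement to hold vacuously, because $\mathcal F_{\alpha\gamma}$ then contains no algebra with such a triple $(z_1,z_2,n)$; this is the emptiness announced in Remark \ref{wewillsee}. I would prove it by inducting on $k$ to show $\gamma_k=\alpha_{k+1}=0$ (out-of-range parameters read as $0$): granting $\gamma_i=\alpha_{i+1}=0$ for $i<k$, Lemma \ref{lemageneralizaado} applies (Lemma \ref{caso(z1+m,n-1,n)} when $k=1$), and since $\fkg$ is a Lie algebra the coefficient of $e_{m+p+4-2k}$ in each Jacobi identity $J(e_{z_1+m},e_{n-1},e_n)=0$ must vanish, giving the homogeneous system $a_m\gamma_k^2+b_m\gamma_k\alpha_{k+1}+c_m\alpha_{k+1}^2=0$ for $m=0,\dots,r-1$. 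If one knows these $r\geq 2$ binary quadratic forms have no common nontrivial complex zero, then $\gamma_k=\alpha_{k+1}=0$; iterating up to $k=n-z_2-1$ makes $[e_{z_1},e_n]=\alpha_1e_{n-z_2+1}+\gamma_1e_{n-z_2}+\dots+\gamma_{n-z_2-1}e_2$ vanish, contradicting the defining property $[e_{z_1},e_n]\neq 0$ of $z_1$.

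The main obstacle is exactly this coprimality assertion: that for every admissible triple with $p\geq 1$ and every $k=1,\dots,n-z_2-1$, the forms $a_mX^2+b_mXY+c_mY^2$ ($m=0,\dots,r-1$) share no zero besides the origin. I would extract it from the binomial expressions of Notation \ref{ambmcm} via Vandermonde's identity; the natural foothold is that $c_0=0$, so the form for $m=0$ factors as $\gamma_k\,(a_0\gamma_k+b_0\alpha_{k+1})$, after which it suffices to produce, uniformly in $k$ and in the triples at hand, a second index $m$ whose form vanishes on neither linear factor. Carrying out that estimate with the sums $R_k,S_k$ is the delicate computational core; the rest is bookkeeping with the law of Theorem \ref{leygeneral}.
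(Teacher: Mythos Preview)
Your outline follows the paper's own proof almost step for step: the same reductions for $z_2=n-1$ and $z_2=n-2$, the same inclusion $D^2\fkg\subseteq\langle e_2,\dots,e_{z_2+p}\rangle$ (you extract the slightly sharper top index $r+q=2n-z_1-z_2-3$ from $\alpha_1=0$; the paper writes $2n-z_1-z_2-2$ but arrives at the identical threshold $z_1>2(n-z_2)-4$), and the same inductive use of Lemmas~\ref{caso(z1+m,n-1,n)} and~\ref{lemageneralizaado} to force $\gamma_k=\alpha_{k+1}=0$ for all relevant $k$ when $p\geq 1$, contradicting the definition of the invariants. One small correction: the induction should run up to $\min\{n-z_2-1,\,z_2-z_1\}$, so that either $[e_{z_1},e_n]=0$ or $[e_{z_2},e_{z_2+1}]=0$ is reached, rather than aiming solely at $[e_{z_1},e_n]$ as you do.

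The one genuine gap is exactly the step you flag as the ``delicate computational core'': the claim that the binary forms $a_mX^2+b_mXY+c_mY^2$ for $m=0,\dots,r-1$ have no common nontrivial zero. The paper does not leave this to a generic coprimality argument; it works concretely with the two indices $m=0$ and $m=1$. From $c_0=0$ the $m=0$ relation reads $\gamma_k(a_0\gamma_k+b_0\alpha_{k+1})=0$; subtracting the $m=1$ relation from $(p+1)$ times the $m=0$ relation eliminates the sums $\sum R_k$ and $\sum S_k$ from the leading coefficients and produces a second relation $a'\gamma_k^2+b'\gamma_k\alpha_{k+1}-c_1\alpha_{k+1}^2=0$, where $a'$ and $b'$ are explicit rational multiples of $\binom{p+1}{q}$. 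A direct sign check under the standing constraints then gives $a_0<0$, $b_0<0$, $a'\geq 0$, $b'>0$, $c_1<0$, and these signs make the two relations incompatible unless $\gamma_k=\alpha_{k+1}=0$. That elimination-plus-sign computation is precisely what your proposal is missing; once you carry it out, the argument is complete and coincides with the paper's.
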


\begin{proof}

Let $\fkg$ be a Lie algebra belonging to the family $\mathcal{G}$ with associated triple $(z_1,z_2,n)$. If $z_2=n-1$, then $\fkg$ is metabelian according to Remark \ref{cason-1}. In case $z_2=n-2$, the algebra $\fkg$ has derived length $3$ by Lemma \ref{cason-2}.

We assume now that $\fkg$ is a non-model filiform Lie algebra with $z_2 \leq n-3$. In that case, $n \geq 8$ due to inequalities (\ref{Bound}) and the derived Lie algebra $\mathcal{D} \fkg$ is not abelian.

Lemma \ref{alpha1es0} implies $\alpha_1=0$ in the law given in Theorem \ref{leygeneral}.
Therefore, in $\mathcal{D}^2\fkg$ we have the following three types of brackets

\vspace{.4cm}

\noindent $[e_{z_1+i},e_{z_2+1}]=\alpha_2 e_{i+1}+\cdots + \alpha_{i+1}e_2 \  {\mbox{ for  }} \, 0 \leq i \leq z_2-z_1$;

\vspace{.4cm}

\noindent $[e_{z_1},e_{z_2+j}]=\gamma_1 \, e_j+\cdots+\gamma_{j-1} \, e_2 \ {\mbox{ for  }} \, 2 \leq j \leq n-z_2-1$;

\vspace{.4cm}

\noindent $[e_{z_1+k},e_{z_2+\ell}] = \sum_{h=2}^{k+\ell} P_h([e_{z_1+k-1},e_{z_2+\ell}] +[e_{z_1+k},e_{z_2+\ell-1}])
e_{h+1}+ \gamma_{k+\ell-1} \, e_2$ \, {\mbox{ for }}\, $2 \leq \ell \leq n-z_2-1$, and  $0< k < z_2-z_1+\ell$.

\vspace{.4cm}

Consequently, $D^2\fkg$ is contained in the greatest of the following spaces
$$\langle e_2, \ldots, e_{z_2-z_1+2} \rangle, \quad \langle e_2, \ldots,e_{n-z_2} \rangle, \quad \langle e_2, \ldots, e_{2n-z_1-z_2-2} \rangle.$$

Notice that $z_2-z_1+2 \leq 2n-z_1-z_2-2$ since $z_2 \leq n-3$. Moreover, $n-z_2 \leq 2n-z_1-z_2-2$ due to the fact that $n-z_1 \leq n-z_2 \leq 2$.

Therefore, $D^2\fkg\subset \langle e_2, \ldots, e_{2n-z_1-z_2-2} \rangle$.
Let us notice that if $2n-z_1-z_2-2 < z_2+1$, then $D^3\fkg=\{0\}$ by definition of $z_2$, that is, when $z_1 > 2(n-z_2)-4$ we can affirm that $\mathfrak{g}$ has derived length $3$.

Now, we suppose that $4 \leq z_1 \leq 2(n-z_2)-4$ where $z_2 \leq n-3$ and verifying expression (\ref{Bound}). We are going to prove that there is no filiform Lie algebra associated with the triple $(z_1,z_2,n)$ under these conditions.

In order to do so, we consider the Jacobi identities $J(e_{z_1+m},e_{n-1},e_n)=0$, for $0 \leq m \leq s$, where $s=\min\{q+1,r-q-1\}$ (see Notation \ref{syk}). We prove, inductively, that, at least, one of the following brackets is zero
$$[e_{z_1},e_n]=\gamma_1 \, e_{n-z_2}+\gamma_2 \, e_{n-z_2-1}+\cdots+\gamma_{n-z_2-1} \, e_2,$$
$$[e_{z_2},e_{z_2+1}]=\alpha_2 e_{z_2-z_1+1}+\cdots + \alpha_{z_2-z_1+1}e_2$$
which is a contradiction with the definition of the invariants $z_1$ and $z_2$.

First, we consider the equations defined by the coefficient of $e_{m+p+2}$ in the Jacobi identities $J(e_{z_1+m},e_{n-1},e_n)=0$, for $m=0,1$. Notice that those expressions were obtained in Lemma \ref{caso(z1+m,n-1,n)} and are given by
\begin{equation}\label{mvale0}
\gamma_1^2 a_0+\gamma_1 \alpha_2 b_0=0
\end{equation}
\begin{equation}\label{mvale1}
\gamma_1^2 a_1+\gamma_1 \alpha_2 b_1 + \alpha_2^2 c_1=0,
\end{equation}
where
$$a_0=\binom{p}{q}-\binom{p}{q-1} - \sum_{k=0}^{\lfloor \frac{r}{2} \rfloor} R_k=\dfrac{(p-2q+1)}{(p-q+1)}\binom{p}{q}-\sum_{k=0}^{\lfloor \frac{r}{2} \rfloor} R_k$$
$$b_0=\binom{p}{q+1}-\binom{p}{q} - \sum_{k=0}^{\lfloor \frac{r}{2} \rfloor} S_k=\dfrac{(p-2q-1)}{(q+1)}\binom{p}{q}-\sum_{k=0}^{\lfloor \frac{r}{2} \rfloor} S_k$$
$$a_1=q\binom{p+1}{q}-(q+1)\binom{p+1}{q-1} -(p+1) \sum_{k=0}^{\lfloor \frac{r}{2} \rfloor} R_k=\dfrac{(p-2q+1)}{(p-q+2)}q\binom{p+1}{q}-(p+1)\sum_{k=0}^{\lfloor \frac{r}{2} \rfloor} R_k$$
$$b_1=q\binom{p+1}{q+1}-\binom{p+1}{q}-(q+1)\binom{p+1}{q}-\binom{p+1}{q-1} - \sum_{k=0}^{\lfloor \frac{r}{2} \rfloor} R_k-(p+1)\sum_{k=0}^{\lfloor \frac{r}{2} \rfloor} S_k =$$
$$\dfrac{(q(p-2q-2)-2)(p-q+2)-q(q+1)}{(q+1)(p-q+2)}\binom{p+1}{q}-\sum_{k=0}^{\lfloor \frac{r}{2} \rfloor} R_k-(p+1)\sum_{k=0}^{\lfloor \frac{r}{2} \rfloor} S_k$$
$$c_1=\binom{p+1}{q+1}-\binom{p+1}{q} - \sum_{k=0}^{\lfloor \frac{r}{2} \rfloor} S_k=\dfrac{p-2q}{p-q+1}\binom{p+1}{q+1}-\sum_{k=0}^{\lfloor \frac{r}{2} \rfloor} S_k.$$
Now, we compute the difference between the equation (\ref{mvale0}) multiplied by $(p+1)$ and equation (\ref{mvale1}), obtaining
\begin{equation}\label{m0ym1restados}
\gamma_1^2 a' + \gamma_1 \alpha_2 b' - \alpha_2^2 c_1=0
\end{equation}
where
$$a'=\dfrac{(p-2q+1)(p-2q+2)}{p-q+2} \binom{p+1}{q}$$ and
$$b'=\dfrac{(p-q+2)\{(p-2q)^2+(q+1)\} +q(q+1)}{(p-q+2)(q+1)} \binom{p+1}{q}+\sum_{k=0}^{\lfloor \frac{r}{2} \rfloor} R_k.$$
Let us consider the system formed by equations (\ref{mvale0}) and (\ref{m0ym1restados}).
Notice that if $\gamma_1=0$, then we conclude that $\alpha_2=0$ since $c_1\neq 0$. Analogously, if $\gamma_2=0$, then we obtain that $\gamma_1=0$ due to the fact that $a_0 \neq 0$. Consequently, we suppose that $\gamma_1 \neq 0$ and $\alpha_2 \neq 0$. From equation (\ref{mvale0}), we obtain
$$\gamma_1=-\frac{b_0}{a_0}\alpha_2.$$
Now, we substitute the previous equality in equation (\ref{m0ym1restados}), obtaining
$$\alpha_2^2 \left(\frac{b_0^2 a'}{a_0^2}-\frac{b_0 b'}{a_0}-c_1   \right)=0.$$
Therefore, we obtain the expression
\begin{equation}\label{expresionm0m1}
b_0^2 a' = a_0 (a_0 c_1-b_0 b').
 \end{equation}
Next, we analyze the sign of the constants $a_0,b_0,a',b',c_1$. By using the definition of $p$, $q$, $r$ (see Notation \ref{pqrRkSk}) and expression (\ref{Bound}), it is easy to prove that
$$a_0<0, \quad b_0<0, \quad a'\geq0, \quad b'>0, \quad c_1<0.$$
Let us notice that in both cases, we obtain a contradiction with expression (\ref{expresionm0m1}). We conclude that
$$\gamma_1=\alpha_2=0.$$

Next, for the inductive step, we assume that $\gamma_k=\alpha_{k+1}=0$, for $1 < k < s$, where $s=\min\{q+1,r-q-1\}$. Now, we impose the coefficient of $e_{m+p+4-2s}$ in the Jacobi identity $J(e_{z_1+m},e_{n-1},e_n)=0$, for $m=s-1$ and $m=s$, to be zero. Let us notice that those coefficients were obtained in Lemma \ref{lemageneralizaado}. Following a similar reasoning than before, we conclude that
$$\gamma_{s}=\alpha_{s+1}=0.$$
Consequently, we obtain that $[e_{z_1},e_n]=0$ and/or $[e_{z_2},e_{z_2+1}]=0$ and this is a contradiction with the definition of invariants $z_1$ and $z_2$, see Subsection \ref{TwoNumerical}.

\end{proof}

\begin{rmr}\label{counterexample} The converse of Theorem \ref{solvability-index-leq4} does not hold. The Lie algebra $\frak{f}_{\frac{9}{10},7}$ in \cite[Prop. 3.1]{BUarxiv}, has derived length 3 and does not belong to the family $\mathcal{G}$.
\end{rmr}

\begin{rmr}\label{fabcempty}
Notice that we have proved in particular that if $(z_1,z_2,n)$ satisfies
$$4\leq z_1 \leq 2(n-z_2)-4 \qquad {\rm and} \qquad z_1 \leq z_2 \leq n-3 \leq 2z_2-5,$$
then $\mathcal{G}$ is the empty set. The number of triples $(z_1,z_2,n)$ in the previous region is asymptotically $\frac{n^2}{3}$.

\end{rmr}

\begin{rmr}

In \cite[Example 3.2]{BUarxiv} an example of a filiform Lie algebra with dimension $15$ and derived length $4$ is given. Using Theorem \ref{leygeneral} we can generalize this example providing a family of $15$-dimensional filiform Lie algebras with derived length $4$ associated with the triple $(4,9,15)$. This family is given by the following law:
\begin{align*}\scriptsize
[e_1,e_h] &=e_{h-1} \,\, {\rm for} \,\, 3 \leq h \leq 15, & [e_4,e_{15}] &=\gamma_5 e_2, \\
[e_{6}, e_{15}] &= (\gamma_5+2\beta_{1,5}+\beta_{2,4})e_{4}, & [e_{6}, e_{13}] &= \beta_{2,4}e_{2}, \\
[e_{7}, e_{13}] &= (\beta_{2,4}+\beta_{3,3})e_{3}, & [e_5, e_{14}] &= \beta_{1,5}e_{2},   \\
[e_{7}, e_{15}] &= (\gamma_5+3\beta_{1,5}+3\beta_{2,4}+\beta_{3,3})e_{5}, &  [e_{5}, e_{15}] &= (\gamma_5+\beta_{1,5})e_{3}, \\
[e_{8}, e_{13}] &= (\beta_{2,4}+2\beta_{3,3}+\beta_{4,2})e_{4}, & [e_{6}, e_{14}] &= (\beta_{1,5}+\beta_{2,4})e_{3}, \\
[e_{8}, e_{14}] &= (\beta_{1,5}+3\beta_{2,4}+3\beta_{3,3}+\beta_{4,2})e_{5}, &  [e_{7}, e_{12}] &= \beta_{3,3}e_{2}, \\
[e_{8}, e_{15}] &= (\gamma_5+4\beta_{1,5}+6\beta_{2,4}+4\beta_{3,3}+\beta_{4,2})e_{6}   , &  [e_{7}, e_{14}] &= (\beta_{1,5}+2\beta_{2,4}+\beta_{3,3})e_{4}, \\
[e_{9}, e_{12}] &= (\beta_{3,3}+2\beta_{4,2}+\alpha_6)e_{4}, & [e_{8}, e_{11}] &= \beta_{4,2}e_{2}, \\
[e_{9}, e_{13}] &= (\beta_{2,4}+3\beta_{3,3}+3\beta_{4,2}+\alpha_6)e_{5}, & [e_{8}, e_{12}] &= (\beta_{3,3}+\beta_{4,2})e_{3}, \\
[e_{9}, e_{14}] &= (\beta_{1,5}+4\beta_{2,4}+6\beta_{3,3}+4\beta_{4,2}+\alpha_6)e_{6}, & [e_{9}, e_{10}] &= \alpha_6e_{2}, \\
[e_{9}, e_{15}] &= (\gamma_5+5\beta_{1,5}+10\beta_{2,4}+10\beta_{3,3}+5\beta_{4,2}+\alpha_6)e_{7}, & [e_{9}, e_{11}] &= (\beta_{4,2}+\alpha_6)e_{3}, \\
[e_{10}, e_{12}] &= (\beta_{3,3}+3\beta_{4,2}+2\alpha_6)e_{5}, & [e_{10}, e_{11}] &= (\beta_{4,2}+\alpha_6)e_{4},\\
[e_{10}, e_{13}] &= (\beta_{2,4}+4\beta_{3,3}+6\beta_{4,2}+3\alpha_6)e_{6}, & \\
[e_{10}, e_{14}] &= (\beta_{1,5}+5\beta_{2,4}+10\beta_{3,3}+10\beta_{4,2}+4\alpha_6)e_{7}, & \\
[e_{10}, e_{15}] &= (\gamma_5+6\beta_{1,5}+15\beta_{2,4}+20\beta_{3,3}+15\beta_{4,2}+5\alpha_6)e_{8}, & \\
[e_{11}, e_{12}] &= (\beta_{3,3}+3\beta_{4,2}+2\alpha_6)e_{6}, & \\
[e_{11}, e_{13}] &= (\beta_{2,4}+5\beta_{3,3}+9\beta_{4,2}+5\alpha_6)e_{7}, & \\
[e_{11}, e_{14}] &= (\beta_{1,5}+6\beta_{2,4}+15\beta_{3,3}+19\beta_{4,2}+9\alpha_6)e_{8}, & \\
[e_{11}, e_{15}] &= (\gamma_5+7\beta_{1,5}+21\beta_{2,4}+35\beta_{3,3}+34\beta_{4,2}+14\alpha_6)e_{9}, & \\
[e_{12}, e_{13}] &= (\beta_{2,4}+5\beta_{3,3}+9\beta_{4,2}+5\alpha_6)e_{8}, & \\
[e_{12}, e_{14}] &= (\beta_{1,5}+7\beta_{2,4}+20\beta_{3,3}+28\beta_{4,2}+14\alpha_6)e_{9}, & \\
[e_{12}, e_{15}] &= (\gamma_5+8\beta_{1,5}+28\beta_{2,4}+55\beta_{3,3}+62\beta_{4,2}+28\alpha_6)e_{10}, &  \\
[e_{13}, e_{14}] &= (\beta_{1,5}+7\beta_{2,4}+20\beta_{3,3}+28\beta_{4,2}+14\alpha_6)e_{10}, & \\
[e_{13}, e_{15}] &= (\gamma_5+9\beta_{1,5}+35\beta_{2,4}+75\beta_{3,3}+90\beta_{4,2}+42\alpha_6)e_{11}, & \\
[e_{14}, e_{15}] &= (\gamma_5+9\beta_{1,5}+35\beta_{2,4}+75\beta_{3,3}+90\beta_{4,2}+42\alpha_6)e_{12}. &
\end{align*}
verifying the following relations
$$\alpha_6 = \frac{1}{14} \beta, \quad \gamma_5 = 363 \beta, \quad \beta_{1,5}= 27 \beta, \quad \beta_{2,4}=\frac{21}{5} \beta, \quad \beta_{4,2}=\frac{3}{10} \beta$$
where $\beta=\beta_{3,3} \in {\C}\setminus\{0\}$.
Notice that \cite[Example 3.2]{BUarxiv} is obtained for $\beta=\frac{1}{858}$.

Finally, it is also possible, using Theorem \ref{leygeneral}, to give a general family of filiform Lie algebras with dimension $31$ and derived length $5$. The laws of the corresponding family $\mathcal{F}$ associated with the triple $(4,17,31)$ depend on the following $14$ parameters
$$\{\alpha_{14}, \quad \gamma_{13}, \quad  \beta_{k,\ell} \,:  \,\, k+\ell=14, \,\, 1\leq k, \,\, 2\leq \ell  \}$$
and the derived series of this algebra $\fkg$ is given by
$$\mathcal{D}^0 \, \fkg=\fkg, \quad \mathcal{D} \, \fkg=\langle e_2, \ldots, e_{30} \rangle, \quad \mathcal{D}^2 \, \fkg=\langle e_2, \ldots, e_{26} \rangle,
\quad \mathcal{D}^3 \, \fkg=\langle e_2, \ldots, e_{18} \rangle,$$
$$\mathcal{D}^4 \, \fkg=\langle e_2 \rangle, \quad \mathcal{D}^5 \, \fkg=\{0\}.$$
\end{rmr}

\section*{Acknowledgment}
The authors would like to thank the referees for their valuable comments that have improved and clarified the presentation of the paper. This work has been partially supported by MTM2016-75024-P and FEDER, FQM-326 and FQM-333.

\end{document}